\documentclass[11pt]{article}
\setcounter{tocdepth}{2}
\usepackage[toc,page]{appendix}
\usepackage{stmaryrd}
\usepackage{etoolbox}
\usepackage{pst-node}
\usepackage{tikz-cd} 
\appto\appendix{\addtocontents{toc}{\protect\setcounter{tocdepth}{0}}}

\usepackage{amsmath,amssymb,amsthm}
\usepackage{url}
\usepackage{enumitem}
\usepackage{amsfonts,amsmath, amssymb,latexsym}
\usepackage{mathtools}
\usepackage{multirow,comment}
\setlength{\textheight}{8.75in}
\setlength{\textwidth}{6.5in}
\setlength{\topmargin}{0.0in}
\setlength{\headheight}{0.0in}
\setlength{\headsep}{0.0in}
\setlength{\leftmargin}{0.0in}
\setlength{\oddsidemargin}{0.0in}
\setlength{\parindent}{3pc}

\def\Z{{\mathbb Z}}
 \DeclareFontFamily{U}{wncy}{}

\def\SL{{\rm SL}}
\def\GL{{\rm GL}}

\def\PGL{{\rm PGL}}

\def\Gal{{\rm Gal}}
\def\Cl{{\rm Cl}}

\def\O{{\mathcal O}}
\def\P{{\mathbb P}}

\def\Aut{{\rm Aut}}

\def\F{{\mathbb F}}

\def\Z{{\mathbb Z}}
\def\P{{\mathbb P}}
\def\F{{\mathbb F}}

\def\CC{{\mathcal C}}

\def\Aut{{\text{Aut}}}
\def\Supp{{\text{Supp}}}
\def\Spec{{\text{Spec}}}

\def \gcd{{\rm gcd}}
\def \det{{\rm det}}
\usepackage{enumitem}

\newcommand*{\ra}{\rightarrow}

\newcommand*{\ol}{\overline}

\usepackage{amsthm}
\newtheorem{theorem}{Theorem}

\newtheorem{lemma}[theorem]{Lemma}

\newtheorem{proposition}[theorem]{Proposition}

\def\pp{{\mathfrak{p}}}
\def\qq{{\mathfrak{Q}}}
\def\Div{{\text{Div}}}
\def\deg{{\text{deg}}}

\newtheorem*{definition}{Definition}
\newtheorem{remark}[theorem]{Remark}

\newcommand*{\Oo}{\mathcal{O}}
\newcommand*{\ok}{{\Oo_{K}}}
\newcommand*{\oll}{{\Oo_{L}}}
\newcommand*{\oks}{{\Oo_{K,S}}}
\newcommand*{\oh}{{\Oo_{H}}}
\newcommand*{\olt}{{\Oo_{L,T}}}

\newcommand*{\oht}{{\Oo_{H,T}}}

 %
%

\usepackage{amscd,amssymb,amsmath}
\usepackage{enumitem}
\usepackage{relsize}
\usepackage{color}
\usepackage{amsmath}

\usepackage{xcolor}
\usepackage{environ}
\definecolor{darkpurple}{rgb}{0.5, 0.0, 0.5}
\newif\ifshowcontent
\showcontentfalse

\NewEnviron{highlighted}{
    \ifshowcontent
    \noindent
    {\bfseries\color{darkpurple} \textit{Note:} \BODY}
    \par
  \fi
}

\author{Fatemehzahra Janbazi and Fateme Sajadi}
 
\title{A unified finiteness theorem for curves}
\begin{document}
\maketitle
\begin{abstract}
We study the arithmetic of Galois-invariant sets of points on algebraic curves with controlled reduction behavior. Let $C$ be a smooth projective curve with a smooth proper model $\mathcal{C}$ over $\mathcal{O}_{K,S}$. We define $\Omega_n$ as the set of $n$-element subsets of $C(\overline{K})$ that are invariant under $\Gal(\overline{K}/K)$ and such that no two points in the set become identified modulo any prime $\mathfrak{p} \notin S$. Our main result establishes that $\Omega_n$ breaks into finitely many orbits under the action of $\Aut_{\mathcal{O}_{K,S}}(\mathcal{C})$, generalizing finiteness theorems of Birch--Merriman, Siegel, and Faltings.
\end{abstract}

\section{Introduction}
Let $K$ be a number field with ring of integers $\mathcal{O}_K$, and let $S$ be a finite set of primes in $\mathcal{O}_K$. Consider a smooth projective curve $C$ of genus $g$ defined over $K$, and let $\mathcal{C}$ be a smooth and proper model of $C$ over $\mathcal{O}_{K,S}$, with an associated automorphism group $\Aut_{\mathcal{O}_{K,S}}(\mathcal{C})$.

\begin{definition}
Define $\Omega_{n,K}(C;S)$ as follows:
\begin{equation*}
    \Omega_{n,K}(C;S) = \left\{ A \subset C(K) :  \begin{array}{ll} &\#A = n \\ &\#r_{\mathfrak{p}}(A) = n \quad \forall \, \mathfrak{p} \notin S  
                \end{array} \right\},
\end{equation*}
where $r_{\mathfrak{p}}$ denotes reduction at $\mathfrak{p}$. 
\end{definition}

We give a precise definition of the reduction map in Section \ref{reduction rp}. Our first finiteness result concerns the action of the automorphism group on this space. Specifically, we show:

\begin{theorem}\label{K-points Theo} 
The number of orbits in $\Aut_{\oks}(\CC)\backslash \Omega_{n,K}(C;S)$ is finite.
\end{theorem}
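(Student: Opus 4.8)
The plan is to reduce Theorem~\ref{K-points Theo} to the three classical finiteness theorems according to the genus $g$ of $C$, with the non-collision condition playing in each case the role of an integrality (or $S$-unit) hypothesis. The starting observation is that $\#r_{\mathfrak{p}}(A)=n$ for all $\mathfrak{p}\notin S$ is equivalent to \emph{pairwise} non-collision: for every pair of distinct $P,Q\in A$, the two sections $\Spec\oks\to\CC$ determined by $P$ and $Q$ (which exist and are unique because $\CC$ is smooth and proper) are disjoint in every fibre over $\mathfrak{p}\notin S$; equivalently, $(P,Q)$ defines an $\oks$-point of the open complement $\CC\times_{\oks}\CC\smallsetminus\Delta$ of the diagonal. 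This is the geometric avatar of ``the divisor $A$ has $S$-unit discriminant.'' When $g\geq 2$ the proof is then immediate: by Faltings' theorem $C(K)$ is finite, so $\Omega_{n,K}(C;S)\subseteq\binom{C(K)}{n}$ is already finite and there is nothing further to prove (the automorphism group is finite as well, but this is not needed).

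If $g=1$, either $C(K)=\emptyset$, in which case $\Omega_{n,K}(C;S)$ is empty for $n\geq 1$, or $C(K)\neq\emptyset$ and, fixing an origin $O$, the smooth proper model $\CC$ is an elliptic curve (abelian scheme) over $\oks$ with $\CC(\oks)=E(K)$. Translations by sections then lie in $\Aut_{\oks}(\CC)$, so I would use one such translation to move a chosen point of $A$ to $O$. Writing the remaining points as differences $Q_i=P_i-P_1$, pairwise non-collision with $P_1$ says precisely that each $Q_i$ avoids $O$ in every fibre over $\mathfrak{p}\notin S$, i.e. $Q_i$ is an $S$-integral point of the affine curve $E\smallsetminus\{O\}$. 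Since $E\smallsetminus\{O\}$ has genus $1$, Siegel's theorem leaves only finitely many such $Q_i$, hence finitely many possibilities for $\{Q_2,\dots,Q_n\}$ and therefore finitely many orbits.

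If $g=0$, there is nothing to prove when $C(K)=\emptyset$; otherwise $C\cong\PP^1_K$ with model $\PP^1_{\oks}$ and $\Aut_{\oks}(\CC)=\PGL_2(\oks)$. For $n\leq 2$ the group acts with a single orbit, so assume $n\geq 3$. The key (and most technical) step is that three pairwise non-colliding points can be moved to $0,1,\infty$ by an element of $\PGL_2(\oks)$: writing the points as primitive integral vectors, non-collision modulo $\mathfrak{p}$ says exactly that the relevant $2\times 2$ determinants are $S$-units, which is the unimodularity needed to perform this normalization \emph{integrally}. After normalizing, the remaining points become scalars $\lambda_i\in K$ for which non-collision with $0$, with $1$, with $\infty$, and with each other forces $\lambda_i,\ \lambda_i-1,\ \lambda_i-\lambda_j\in\okss$. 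In particular each $\lambda_i$ and $1-\lambda_i$ is an $S$-unit, so the $S$-unit theorem (equivalently Birch--Merriman) leaves finitely many choices, completing the genus-$0$ case.

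The main obstacle I anticipate is not any single deep input --- Faltings, Siegel, and the $S$-unit theorem are invoked as black boxes --- but rather the verification that in genera $0$ and $1$ the normalizations can be realized by honest automorphisms of the integral model $\CC$ over $\oks$, and that the non-collision hypothesis translates \emph{exactly} into the integrality ($S$-unit) condition required to apply each finiteness theorem. This is where the smoothness and properness of $\CC$, together with the determinant interpretation of reduction in the genus-$0$ case, do the real work. The final passage from finitely many normalized configurations back to finitely many $\Aut_{\oks}(\CC)$-orbits, absorbing the residual finite symmetry coming from the choice of which points are sent to the base points, is then routine.
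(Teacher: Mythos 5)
Your treatment of genus $\geq 2$ (Faltings) and genus $1$ (translate a point of $A$ to the origin, apply Siegel to the $S$-integral points of $E\smallsetminus\{O\}$) matches the paper's argument. The genus-$0$ case, however, has a genuine gap, and it is exactly where the paper has to work hardest. Theorem~\ref{K-points Theo} is stated for an \emph{arbitrary} smooth proper model $\CC$ of $C$ over $\oks$, and when $C\cong\PP^1_K$ such a model need not be $\PP^1_{\oks}$: for instance $\PP(\O\oplus I)$, with $I$ an ideal whose class is not a square in $\Cl_S(K)$, is a smooth proper model of $\PP^1_K$ that is not isomorphic to $\PP^1_{\oks}$, and its $\oks$-automorphism group need not be $\PGL_2(\oks)$. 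You silently assume $\CC\cong\PP^1_{\oks}$. The paper isolates this as precisely the case not covered by Theorem~\ref{SplitTheoremAllButOne} and handles it by base change to a field $H$ containing the Hilbert class field (over which the model trivializes), applying the standard-model result there, and then descending orbit by orbit via the injection of $\Aut_{\oks}(\CC)\backslash\Sigma_A$ into $H^1(\Gal(H/K),M_A)$ together with the finiteness of that cohomology set (Lemma~\ref{embed to cohomo} and Proposition~\ref{finite coh}). None of this machinery appears in your sketch, and without it the theorem as stated is not proved.

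The same class-group obstruction undermines the details of your standard-model argument. $\PGL_2(\oks)$ does not act transitively on $\PP^1(K)$ unless $\Cl_S(K)$ is trivial: the class of $\alpha\oks+\beta\oks$ is a $\GL_2(\oks)$-invariant of $[\alpha:\beta]$, and the paper's $n=1$ case rests on the bijection $\SL_2(\oks)\backslash\PP^1(K)\cong\Cl_S(K)$, not on transitivity; your claim of a single orbit for $n\le 2$ is false in general (finiteness survives, but for a different reason). Likewise, your normalization of three pairwise non-colliding points to $0,1,\infty$ by an element of $\PGL_2(\oks)$ requires each point to admit a primitive (unimodular) coordinate vector over $\oks$, which again fails for nontrivial ideal classes. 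The paper circumvents this by choosing coprime representatives over the Hilbert class field, applying Birch--Merriman there, and descending the normalizing automorphism $\gamma$ using the rigidity of automorphisms of $\PP^1$ fixing three points. Your endgame --- that after normalization each $\lambda_i$ and $1-\lambda_i$ is an $S$-unit, so the $S$-unit equation finishes the job --- is correct and is indeed equivalent to the Birch--Merriman input, but the normalization it depends on is only available after these additional steps.
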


We now extend this setting to include rational points over finite extensions of $K$, considering Galois-invariant subsets of $C(\overline{K})$. 

\begin{definition}
Define $\Omega_{n,\overline{K}}(C;S)$ as the set of Galois-invariant $n$-element subsets of $C(\overline{K})$ that remain distinct under reduction at all primes outside $S$:
\begin{equation*}
\Omega_{n,\overline{K}}(C;S) =\left \{ A \subset C(\overline{K}) : 
\begin{array}{lll} 
    &\#A = n \\ 
    &\sigma(A)=A \quad &\forall \sigma \in \Gal\big(\overline{K}/K\big) \\ 
    &\#r_{\mathfrak{p}}(A) = n \quad &\forall \,  \mathfrak{p} \notin S
\end{array} 
\right \}.
\end{equation*}
\end{definition}
Our main theorem establishes the finiteness of this space under the action of the automorphism group. Specifically, we have the following result.

\begin{theorem}\label{Kbar-points Theo}
The $\Aut_{\oks}(\CC)\backslash \Omega_{n,\overline{K}}(C;S)$ is finite.
\end{theorem}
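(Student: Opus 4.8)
The plan is to reduce Theorem~\ref{Kbar-points Theo} to the $K$-rational case already handled in Theorem~\ref{K-points Theo}, by spreading all the geometric points of a configuration over a single controlled number field and then descending the group action via nonabelian Galois cohomology. First I would control the field of definition. Given $A \in \Omega_{n,\overline{K}}(C;S)$, let $L_A = K(P : P \in A)$ be the field generated by its points. Since $A$ is $\Gal(\overline{K}/K)$-stable, the Galois action permutes the $n$ points, yielding an embedding $\Gal(L_A/K) \hookrightarrow S_n$ and hence $[L_A:K] \le n!$. The reduced closed subscheme $\mathcal{Z}_A \subset \mathcal{C}$ cut out by $A$ is finite flat over $\mathcal{O}_{K,S}$, and the hypothesis $\#r_{\mathfrak{p}}(A) = n$ for all $\mathfrak{p} \notin S$ says exactly that every fiber of $\mathcal{Z}_A$ is \'etale; thus $\mathcal{Z}_A \to \Spec \mathcal{O}_{K,S}$ is finite \'etale, so $L_A/K$ is unramified outside $S$. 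By the Hermite--Minkowski theorem there are only finitely many such fields, and I let $L$ be the Galois closure over $K$ of their compositum, a single finite Galois extension, unramified outside $S$, containing every $L_A$. Writing $T$ for the primes of $L$ above $S$, base change produces a smooth proper model $\mathcal{C}_L = \mathcal{C} \times_{\mathcal{O}_{K,S}} \mathcal{O}_{L,T}$ of $C_L$.

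Next, every $A \in \Omega_{n,\overline{K}}(C;S)$ now has all of its points in $C(L)$, and since a collision over a prime $\mathfrak{P} \nmid S$ of $L$ is detected by the same geometric reduction as over $\mathfrak{p} = \mathfrak{P} \cap K$, one checks the identification $\Omega_{n,\overline{K}}(C;S) = \Omega_{n,L}(C_L;T)^G$, the subset fixed by $G := \Gal(L/K)$. Here $G$ acts on $\Omega_{n,L}(C_L;T)$ through its action on $C(L)$ and semilinearly on $\tilde{H} := \Aut_{\mathcal{O}_{L,T}}(\mathcal{C}_L)$ via ${}^{\sigma}\phi = (\mathrm{id} \times \sigma)\circ \phi \circ (\mathrm{id}\times\sigma^{-1})$, and Galois descent for the integral model gives $\tilde{H}^G = \Aut_{\mathcal{O}_{K,S}}(\mathcal{C}) =: H$. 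Applying Theorem~\ref{K-points Theo} over $L$, the set $\Omega_{n,L}(C_L;T)$ has only finitely many $\tilde{H}$-orbits.

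The hard part is then to pass from finiteness of $\tilde{H}$-orbits on $\Omega_{n,L}(C_L;T)$ to finiteness of $H$-orbits on the fixed locus $\Omega_{n,L}(C_L;T)^G$, because the smaller group $H$ need not have finite index in $\tilde{H}$ — already $\Aut(\mathbb{P}^1) = \PGL_2$ is infinite, this being precisely the Birch--Merriman case. Since each element of $\Omega_{n,\overline{K}}(C;S)$ is $G$-fixed, it lies in a $G$-stable $\tilde{H}$-orbit, and there are finitely many orbits in all, so it suffices to bound the number of $H$-orbits inside one $G$-stable orbit $O$. Fixing a base point $a_0 \in O^G$ and identifying $O \cong \tilde{H}/\tilde{H}_{a_0}$ as $G$-sets, the twisting exact sequence of pointed sets
\[
1 \to (\tilde{H}_{a_0})^G \to \tilde{H}^G \to O^G \to H^1(G,\tilde{H}_{a_0}) \to H^1(G,\tilde{H})
\]
identifies the set of $H = \tilde{H}^G$ orbits in $O^G$ with a subset of $\ker\big(H^1(G,\tilde{H}_{a_0}) \to H^1(G,\tilde{H})\big)$.

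The decisive finiteness input is that the stabilizer $\tilde{H}_{a_0}$ — automorphisms of $\mathcal{C}_L$ preserving the $n$-point configuration $a_0$ setwise — is finite whenever $2g-2+n > 0$, the usual stability condition (forcing $n \ge 3$ when $g=0$, consistent with Birch--Merriman; $n \ge 1$ when $g=1$; automatic when $g \ge 2$). With $G$ finite and $\tilde{H}_{a_0}$ finite, $H^1(G,\tilde{H}_{a_0})$ is a finite pointed set, so each $G$-stable orbit contributes only finitely many $H$-orbits; summing over the finitely many $\tilde{H}$-orbits proves the theorem. The two points I expect to require the most care are the descent identity $\tilde{H}^G = H$ for automorphisms of the integral model and the finiteness of $\tilde{H}_{a_0}$ in the degenerate ranges of $(g,n)$, which must either be established directly or excluded at the outset.
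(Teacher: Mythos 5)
Your architecture is exactly the paper's: control the field of definition via unramifiedness outside $S$ and Hermite--Minkowski, apply Theorem~\ref{K-points Theo} over the enlarged field, and bound the number of $\Aut_{\oks}(\CC)$-orbits inside a single $G$-stable $\Aut_{\olt}(\CC_L)$-orbit by (a subset of) $H^1(G,\tilde H_{a_0})$ via the standard twisting sequence. The paper's Lemma~\ref{embed to cohomo} is precisely your injection of the orbit set into $H^1$ of the stabilizer, written out by hand.

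The genuine gap is your treatment of the stabilizer. You make the finiteness of $\tilde H_{a_0}$, under the stability condition $2g-2+n>0$, the ``decisive finiteness input,'' and for the degenerate range you only say it ``must either be established directly or excluded at the outset.'' It cannot be excluded: the theorem is asserted for all $n\ge 1$, including $g=0$ with $n=1$ or $n=2$, and in those cases $\tilde H_{a_0}$ is genuinely infinite --- for $n=2$ it is an extension of $\Z/2\Z$ by the $T$-unit group $\mathcal{O}_{H,T}^\times$ (the torus fixing two points of $\P^1$ together with the swap), and for $n=1$ it is a Borel subgroup, an extension of $\mathcal{O}_{H,T}^\times$ by the additive group $\mathcal{O}_{H,T}$. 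So the finiteness of $H^1(G,\tilde H_{a_0})$ in these cases is not a formal consequence of finiteness of the group; it is the real content. The paper supplies it in Proposition~\ref{finite coh}: finiteness of $H^1(G,\mathcal{O}_{H,T}^\times)$ and of all its twists follows from finite generation of the $T$-unit group, and for the Borel one also needs Lemma~\ref{lem:H1-finite}, which shows that a torsion-free abelian group $A$ with $A/|G|A$ finite has finite $H^1$ (multiplication by $|G|$ annihilates $H^1$, so $H^1(G,A)$ injects into the finite set $H^1(G,A/|G|A)$); these are then assembled through the nonabelian long exact sequence and its twists (Theorem~\ref{coh theo}). Without an argument of this kind your proof covers only the stable range $2g-2+n>0$, which omits two of the cases the theorem claims. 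A secondary point to make explicit: when $g=0$ the model $\CC$ need not be $\P^1_{\oks}$, so to apply the genus-zero analysis over the big field you must enlarge $L$ to contain the Hilbert class field (as the paper does for $H$) so that $\CC_L\cong\P^1_{\O_{L,T}}$; invoking Theorem~\ref{K-points Theo} over $L$ as a black box hides this step.
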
 

In the language of arithmetic surfaces, the theorem can be restated as follows:

\begin{theorem} For each integer $n\in \mathbb{N}$, there are finitely many horizontal divisors of degree $n$ of $\CC$ that are \'etale  over $\Spec \, \oks$, up to $\Aut_{\oks} (\CC)$.
\end{theorem}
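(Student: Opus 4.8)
The plan is to reduce the statement to Theorem \ref{Kbar-points Theo} by constructing an $\Aut_{\oks}(\CC)$-equivariant bijection between the collection of degree-$n$ horizontal divisors of $\CC$ that are finite étale over $\Spec\,\oks$ and the set $\Omega_{n,\overline{K}}(C;S)$. Once such a bijection is in hand, and once it is checked to carry $\Aut_{\oks}(\CC)$-orbits to $\Aut_{\oks}(\CC)$-orbits, the finiteness of the left-hand side follows immediately from the finiteness of $\Aut_{\oks}(\CC)\backslash\Omega_{n,\overline{K}}(C;S)$ already established.

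First I would make the dictionary precise in both directions. Given a horizontal divisor $D$ on $\CC$ that is finite étale of degree $n$ over $\Spec\,\oks$, its restriction to the generic fiber $C$ is finite étale over $K$, hence a reduced effective divisor of degree $n$, i.e. a finite union of closed points of $C$; passing to $\overline{K}$-points yields a $\Gal(\overline{K}/K)$-invariant subset $A_D\subset C(\overline{K})$ with $\#A_D=n$. Conversely, a Galois-invariant $n$-element set $A$ descends to a reduced effective divisor $D_A^0$ of degree $n$ on $C$, and I would take its scheme-theoretic closure $\overline{D_A^0}$ in $\CC$. Because $\oks$ is a Dedekind domain and the closure of a generic-fiber subscheme is $\oks$-torsion-free with no vertical components, $\overline{D_A^0}$ is automatically flat over $\Spec\,\oks$, hence a genuine horizontal divisor with generic fiber $D_A^0$. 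Flatness guarantees that these two assignments are mutually inverse: a flat horizontal divisor is recovered from its generic fiber by closure, and passing to the generic fiber recovers $A$.

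The heart of the argument is the equivalence of the two ``no collision'' conditions. Since $\CC$ is smooth over $\oks$ and $\overline{D_A^0}$ is finite flat of degree $n$, étaleness over $\Spec\,\oks$ is equivalent to each geometric fiber $\overline{D_A^0}\times_{\oks}\overline{\kappa(\mathfrak{p})}$ being reduced for every $\mathfrak{p}\notin S$; as this geometric fiber has length $n$, reducedness is in turn equivalent to its consisting of exactly $n$ distinct points, i.e. to $\#r_{\mathfrak{p}}(A)=n$. Thus $\overline{D_A^0}$ is étale over $\Spec\,\oks$ precisely when $A\in\Omega_{n,\overline{K}}(C;S)$. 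Equivariance is then formal: any $\phi\in\Aut_{\oks}(\CC)$ sends $D$ to $\phi(D)$ and the corresponding set $A_D$ to $\phi(A_D)$, and since $\phi$ is defined over $\oks$ it commutes with reduction at every $\mathfrak{p}\notin S$; hence the bijection intertwines the two group actions and descends to a bijection of orbit sets.

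I expect the main obstacle to lie in the careful handling of the reduction map and the étale locus for points defined over extensions of $K$: one must pass to the integral closure $\olt$ of $\oks$ in a common field of definition $L$ of $A$, base change $\CC$ accordingly, and verify that the fiberwise distinctness condition defining $\Omega_{n,\overline{K}}(C;S)$ coincides exactly with unramifiedness of $\overline{D_A^0}$ over all primes of $\oks$ outside $S$, matching the precise definition of $r_{\mathfrak{p}}$ from Section \ref{reduction rp}. I would also want to confirm that taking the scheme-theoretic closure commutes with this finite base change, so that the étale condition is insensitive to the choice of $L$. Once these scheme-theoretic points are settled, the remainder is the purely formal transport of the finiteness statement through the bijection.
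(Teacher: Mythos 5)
Your proposal is correct and matches the paper's (implicit) argument: the paper offers no separate proof of this theorem, presenting it as a direct restatement of Theorem~\ref{Kbar-points Theo} via exactly the dictionary you construct, in which a Galois-invariant $n$-element subset of $C(\overline{K})$ corresponds to the scheme-theoretic closure of the associated reduced degree-$n$ divisor on the generic fiber, and the condition $\#r_{\mathfrak{p}}(A)=n$ for all $\mathfrak{p}\notin S$ corresponds to that horizontal divisor being \'etale over $\Spec\,\oks$. Your filling-in of the flatness, fiberwise-reducedness, and equivariance details is sound and consistent with the reduction map as defined in Section~\ref{reduction rp}.
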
 

\subsection{Special Cases}
In this section, we record several classical finiteness theorems as special cases of the results established above. These results are directly used in the proof of Theorem~\ref{K-points Theo}. Here, we simply identify them as special cases without re-proving them.

\subsection*{Birch-Merriman's Theorem}
Let $C \cong \mathbb{P}^1_K$ with the model $\mathcal{C} = \mathbb{P}^1_{\mathcal{O}_{K,S}}$, and let $n \geq 3$. The automorphism group of $\mathcal{C}$ over $\mathcal{O}_{K,S}$ is  
\begin{equation*} 
    \operatorname{Aut}_{\mathcal{O}_{K,S}}(\mathcal{C}) = \operatorname{PGL}_2(\mathcal{O}_{K,S}). 
\end{equation*}  
The following result of Birch and Merriman~\cite[Theorem~1]{birch1972finiteness} establishes the finiteness of equivalence classes of binary forms under the action of \( \GL_2(\mathcal{O}_{K,S}) \), up to scaling by \(S\)-units.
Two homogeneous polynomials \( f,g\in K[x,y] \) are said to be \((K,S)\)-equivalent if there exist
\[
\lambda\in\mathcal{O}_{K,S}^{\times},\qquad 
\gamma\in\GL_2(\mathcal{O}_{K,S})
\]
such that
\[
\lambda f=\gamma\cdot g .
\]
\begin{theorem}\label{BM in intro} 
  
    If $n \geq 3$, then there are only finitely many $(K, S)$-orbits of binary forms $f \in \mathcal{O}_K[x, y]$ with $\deg(f) = n$ and discriminant  $\Delta(f) \in \mathcal{O}_{K,S}^{\times}$.
\end{theorem}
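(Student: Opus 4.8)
The plan is to translate the statement about binary forms into the geometry of points on $\mathbb{P}^1$ and then reduce to the finiteness of solutions of the $S$-unit equation. A binary form $f \in \mathcal{O}_K[x,y]$ of degree $n$ with $\Delta(f) \neq 0$ factors over $\overline{K}$ as $f = a \prod_{i=1}^n (\beta_i x - \alpha_i y)$, and its roots $P_i = [\alpha_i : \beta_i]$ are $n$ distinct points of $\mathbb{P}^1(\overline{K})$ forming a $\Gal(\overline{K}/K)$-stable set. Under the discriminant formula $\Delta(f) = a^{2n-2}\prod_{i<j}(\alpha_i\beta_j - \alpha_j\beta_i)^2$, the hypothesis $\Delta(f) \in \mathcal{O}_{K,S}^\times$ says precisely that, after choosing integral primitive representatives of the $P_i$, the points stay pairwise distinct under reduction modulo every prime $\mathfrak{p} \notin S$. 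In other words, $f$ defines exactly an element of $\Omega_{n,\overline{K}}(\mathbb{P}^1; S)$, with $\mathrm{PGL}_2$ playing the role of $\Aut_{\mathcal{O}_{K,S}}(\mathbb{P}^1)$, and two forms are $(K,S)$-equivalent if and only if their point-sets agree up to $\mathrm{GL}_2(\mathcal{O}_{K,S})$ and $S$-unit scaling. I would carry out the argument at the level of these point configurations.

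First I would bound the field of definition. Each root $P_i$ generates an extension of $K$ of degree at most $n$ whose relative discriminant divides $\Delta(f)$; since the latter is an $S$-unit, every such extension is unramified outside $S$. By the Hermite--Minkowski theorem there are only finitely many number fields of degree $\leq n$ unramified outside $S$, so the compositum $L = K(P_1, \dots, P_n)$ ranges over a finite set. I would fix one such $L$, let $T$ be the set of primes of $L$ above $S$, and enlarge $T$ by finitely many primes so that $\mathcal{O}_{L,T}$ becomes a principal ideal domain (possible since the class group is finite); enlarging $T$ only strengthens the unit conditions below and is harmless for finiteness.

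Next I would produce the $S$-unit equation. Choosing primitive representatives $(\alpha_i, \beta_i) \in \mathcal{O}_{L,T}^2$, the pairwise \emph{wedge} quantities $w_{ij} = \alpha_i\beta_j - \alpha_j\beta_i$ lie in $\mathcal{O}_{L,T}^\times$, because $P_i$ and $P_j$ are disjoint modulo every prime outside $T$. Since $n \geq 3$, I would apply a transformation in $\mathrm{PGL}_2(L)$ sending $P_1, P_2, P_3$ to $0, 1, \infty$; each remaining root then becomes the cross-ratio $\lambda_j = \frac{w_{1j}\, w_{23}}{w_{2j}\, w_{13}}$, and the cross-ratio identity gives $\lambda_j, 1 - \lambda_j \in \mathcal{O}_{L,T}^\times$. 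Thus each $\lambda_j$ solves the unit equation $u + v = 1$ with $u, v \in \mathcal{O}_{L,T}^\times$, which by the Siegel--Mahler--Evertse theorem has only finitely many solutions. Hence the configuration of normalized roots, and therefore the $\mathrm{PGL}_2(L)$-orbit of $\{P_1, \dots, P_n\}$, takes only finitely many values.

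The main obstacle is the descent from these finitely many $\mathrm{PGL}_2(L)$-orbits back to $(K,S)$-equivalence classes of forms over $\mathcal{O}_K$. Two issues must be reconciled: the normalizing Möbius transformation was taken over $L$ rather than $K$, and the point picture discards both the leading coefficient and the $S$-unit scaling built into $(K,S)$-equivalence. I would handle the first by a Galois-descent argument exploiting that the set $\{P_i\}$ is $\Gal(\overline{K}/K)$-stable, so the finitely many configurations can be realized $K$-rationally as horizontal divisors; and the second by showing that within each fixed configuration the possible integral models over $\mathcal{O}_{K,S}$ differ by an element of $\mathrm{GL}_2(\mathcal{O}_{K,S})$ together with scaling by $\mathcal{O}_{K,S}^\times$, which follows from the primitivity normalization and the finiteness of the class group. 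Controlling this integral descent, rather than the unit-equation input, is the delicate part of the argument.
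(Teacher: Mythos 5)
Your proposal takes a genuinely different route from the paper, and the difference is structural: the paper does not give an independent proof of this statement. It cites Birch and Merriman for the result, uses it as a Diophantine \emph{input} to the genus-zero, $n\ge 3$ case of Theorem~\ref{SplitTheoremAllButOne}, and the paragraph following the statement only explains how the theorem is \emph{recovered} from Theorem~\ref{Kbar-points Theo} via the injective root map $\overline{R}$ on $(K,S)$-classes --- a deduction that would be circular if offered as a proof, which the paper explicitly declines to do (``we simply identify them as special cases without re-proving them''). What you have written is instead an outline of the standard direct proof via the $S$-unit equation: pass to the root configuration, bound the splitting field by Hermite--Minkowski using that an $S$-unit discriminant forces the fields $K(P_i)$ to be unramified outside $S$ (the same fact the paper invokes, via Lemma~8 of \cite{birch1972finiteness}, in Lemma~\ref{discriminant-res}), and feed the cross-ratios $\lambda_j$ into $u+v=1$; the identity $w_{13}w_{2j}-w_{1j}w_{23}=w_{12}w_{3j}$ does show $1-\lambda_j$ is a $T$-unit, so that core is sound, and it is precisely the content the paper outsources to \cite{birch1972finiteness}. (Your remark that enlarging $T$ ``strengthens'' the unit conditions is backwards --- it weakens them and enlarges the solution set --- but the conclusion that this is harmless for finiteness is still right.)

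The place where your proposal falls short of a complete proof is the descent, and you should be aware that this is exactly where the paper spends its real machinery. First, you normalize by an element of $\PGL_2(L)$, but finiteness of $\PGL_2(L)$-orbits is not the statement you need to descend; the point is that with $\mathcal{O}_{L,T}$ a PID and primitive representatives, the unit conditions $w_{12},w_{13},w_{23}\in\mathcal{O}_{L,T}^\times$ let you send $P_1,P_2,P_3$ to $0,1,\infty$ by a matrix in $\GL_2(\mathcal{O}_{L,T})$, so the unit equation actually yields finitely many $\GL_2(\mathcal{O}_{L,T})$-orbits. (At the same step one sees the leading coefficient is forced to be a $T$-unit, so it is absorbed by the equivalence; this disposes of your ``forms versus configurations'' worry.) Second, the passage from finitely many orbits over $\mathcal{O}_{L,T}$ to finitely many over $\mathcal{O}_{K,S}$ is the content of Lemma~\ref{embed to cohomo} and Proposition~\ref{finite coh}: the fibers of the orbit map are controlled by $H^1(\Gal(L/K),M_A)$, and for $n\ge 3$ this is finite because the stabilizer of an $n$-point set in $\PGL_2$ injects into the symmetric group on the points. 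Your final paragraph names the right tools but does not carry out this step; filling it in along the lines just indicated (or as in the paper's own genus-zero, $n\ge 3$ argument, where the Galois conjugates of the normalizing matrix are compared using that an automorphism of $\mathbb{P}^1$ is determined by its effect on three points) would complete the proof.
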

To explain how Theorem \ref{BM in intro} follows from Theorem \ref{Kbar-points Theo}, we define the following set
\begin{equation*}
T(K, S) = \big\{ f \in \mathcal{O}_{K,S}[x, y] : \deg(f) = n,\,  \Delta(f) \in \mathcal{O}_{K,S}^{\times} \big\}.
\end{equation*}
Each $f \in T(K, S)$ determines a set of roots in $\mathbb{P}^1(\overline{K})$, giving a map
\begin{equation*}
R: T(K, S) \to \Omega_{n, \overline{K}}(C; S).
\end{equation*}
The map is well-defined because $\Delta(f) \in \mathcal{O}_{K,S}^{\times}$ guarantees that the roots remain distinct modulo any prime $\mathfrak{p} \notin S$.

The action of $\operatorname{GL}_2(\mathcal{O}_{K,S})$ on $T(K, S)$ by linear transformations preserves the map, allowing it to descend to the quotient. This induces the following map:
\begin{equation*}
\overline{R}: {\sim}_{(K,S)}  \backslash T(K, S)  \to \operatorname{PGL}_2(\mathcal{O}_{K,S}) \backslash \Omega_{n, \overline{K}}(C; S).
\end{equation*}
The map $\overline{R}$ is injective: if two binary forms $f, g \in T(K, S)$ have the same roots in $\mathbb{P}^1(\overline{K})$ and both have discriminant in $\mathcal{O}_{K,S}^{\times}$, then they are $(K, S)$-equivalent. Since Theorem~\ref{Kbar-points Theo} shows that $\operatorname{PGL}_2(\mathcal{O}_{K,S}) \backslash \Omega_{n, \overline{K}}(C; S)$ is finite, it follows that $T(K, S)$ has finitely many $(K,S)$-equivalence classes.

\subsection*{Siegel's Theorem}
Let $C$ be an elliptic curve over $K$ with a smooth, proper model $\mathcal{C}$ over $\mathcal{O}_{K,S}$, and set $n = 2$. Since the restriction map from the automorphism group of the model to that of the curve is injective and, by the theory of minimal models for curves of genus at least one, also surjective, the two groups coincide. Hence:
\begin{align*}
    \Aut_{\mathcal{O}_{K,S}}(\mathcal{C}) &\cong \Aut_K(C), \\
    [\Aut_K(C) &: C(K)] < \infty.
\end{align*}
By Theorem~\ref{K-points Theo}, the set $\Omega_{2,K}(C;S)$ has finitely many orbits under the action of $\Aut_{\mathcal{O}_{K,S}}(\mathcal{C})$. Since $C(K)$ has finite index in this group, it follows that
\begin{equation*}
   \# \, C(K) \backslash \Omega_{2,K}(C;S) < \infty.
\end{equation*}
Each element of $\Omega_{2,K}(C;S)$ is a pair of $K$-points, and by translating, we may assume it is of the form $\{O, z\}$ for some $z \in C(K)$. The condition that the pair lies in $\Omega_{2,K}(C;S)$ ensures that $z$ does not reduce to the origin modulo any prime $\mathfrak{p} \notin S$, so $z$ defines an $\mathcal{O}_{K,S}$-point of $\mathcal{C} \setminus \overline{\{O\}}$, where $\overline{\{O\}}$ denotes the Zariski closure of $O$ in $\mathcal{C}$. In this way, Theorem~\ref{K-points Theo} is closely related to Siegel’s theorem \cite{Siegel} on integral points.

\subsection*{Faltings's Theorem}
When $n = 1$, the set $\Omega_{1,K}$ coincides with the set of $K$-rational points on $C$. If $C$ has genus at least $2$, its automorphism group is finite. In this case, Theorem~\ref{K-points Theo} gives the finiteness of $C(K)$, in agreement with Faltings’ Theorem \cite{Faltings}.

\subsection{Outline}
We begin by establishing Theorem~\ref{K-points Theo}, which forms the foundation for the proofs of our main theorems. The proof of Theorem~\ref{K-points Theo} relies on several Diophantine inputs: Birch–Merriman theorem in genus $0$, Siegel’s theorem in genus $1$, and Faltings’ theorem for genus at least $2$.

After proving Theorem~\ref{K-points Theo}, we introduce a sufficiently large number field $H$ over which all points in $\Omega_{n,\overline{K}}(C;S)$ become $H$-rational. The key step is then to descend from $H$ to $K$, showing that finiteness of orbits over $H$ implies finiteness over $K$. To carry out this descent, we analyze the natural map
\begin{equation*}
\Aut_{\mathcal{O}_{K,S}}(\CC) \backslash \Omega_{n, \overline{K}}(C; S) \to \Aut_{\mathcal{O}_{H,T}}(\CC_H) \backslash \Omega_{n, H}(C_H; T),
\end{equation*}
where $C_H$ and $\CC_H$ denote the base changes of $C$ and $\CC$ to $H$ and $\mathcal{O}_{H,T}$, respectively, and $T$ is the set of primes of $\mathcal{O}_H$ lying above $S$. Since Theorem~\ref{K-points Theo} guarantees finiteness of orbits over $H$, it remains to show that the fibers of this map are finite.

Since the set we consider is closed under the Galois action, the number of elements in each fiber can be related to the first cohomology group of the Galois action on the stabilizer of a point $A \in \Omega_{n, H}(C_H; T)$. Establishing finiteness of this cohomology group then ensures finiteness over $K$, completing the proof of Theorem~\ref{Kbar-points Theo}.

This paper is organized as follows. Section~\ref{sec 2} provides the necessary background, including the definition of the reduction map, its properties, and a review of models and non-commutative cohomology. In Section~\ref{sec 4}, we prove Theorem~\ref{K-points Theo} by considering different cases based on the genus. Finally, in Section~\ref{sec 3}, we establish Theorem~\ref{Kbar-points Theo} using a Galois descent argument as described above.

\section*{Acknowledgment} 

We thank our advisors, Arul Shankar and Jacob Tsimerman, for proposing the problem that led to this work and for their helpful guidance along the way.

\section{Background and notations}\label{sec 2}

\subsection{Models}

A smooth projective curve $C$ over a field $K$ is a geometrically integral, smooth, and projective scheme of dimension one over $K$. 

\begin{definition}
A \emph{model} of $C$ over $\mathcal{O}_{K,S}$ is a flat, finite type scheme $\mathcal{C} \to \operatorname{Spec}(\mathcal{O}_{K,S})$ whose generic fiber recovers $C$, i.e.,
\[
\mathcal{C} \times_{\mathcal{O}_{K,S}} K \cong C.
\]
A \emph{smooth and proper model} of $C$ over $\mathcal{O}_{K,S}$ is a model $\mathcal{C} \to \operatorname{Spec}(\mathcal{O}_{K,S})$ such that $\mathcal{C}$ is smooth and proper over $\mathcal{O}_{K,S}$.
\end{definition}

For any automorphism $\phi \in \Aut_{\mathcal{O}_{K,S}}(\mathcal{C})$, its restriction to the generic fiber yields an automorphism in the group $\Aut_K(C)$. In the minimal regular model of a curve of genus $g \ge 1$, this restriction is in fact an equality:

\begin{theorem}{\cite[4, Prop.~4.6]{silverman1994advanced}}\label{ext aut}
Let $R$ be a Dedekind domain with fraction field $K$, and let $C$ be a smooth projective curve of genus at least $1$. Let $\mathcal{C}$ be a minimal proper regular model of $C$ over $R$, and let $\mathcal{C}^0$ denote the largest subscheme of $\mathcal{C}$ which is smooth over $R$. Then every $K$-automorphism 
\[
\tau : C \longrightarrow C
\] 
of the generic fiber extends to $R$-automorphisms
\[
\tilde{\tau} : \mathcal{C} \longrightarrow \mathcal{C} 
\quad \text{and} \quad 
\tau^0 : \mathcal{C}^0 \longrightarrow \mathcal{C}^0.
\]
\end{theorem}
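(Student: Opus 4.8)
The plan is to derive the statement from the uniqueness of the minimal proper regular model, which is exactly the feature that distinguishes genus $\geq 1$ from genus $0$. Recall the key input: for a smooth projective curve of genus at least one, the minimal proper regular model over $R$ is unique, and moreover any $K$-isomorphism between the generic fibers of two such models extends uniquely to an $R$-isomorphism of the models. Granting this, fix the identification $\iota\colon \mathcal{C}_K \xrightarrow{\sim} C$ of the generic fiber of $\mathcal{C}$ with $C$. Transporting $\tau$ through $\iota$ yields a $K$-automorphism $\iota^{-1}\circ\tau\circ\iota$ of $\mathcal{C}_K$, which by the extension property lifts to a unique $R$-automorphism $\tilde\tau\colon\mathcal{C}\to\mathcal{C}$ restricting to $\tau$ on the generic fiber. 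This produces $\tilde\tau$.

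To establish the uniqueness/extension input, I would first reduce to the case where $R$ is a discrete valuation ring, since regularity and minimality are local on $\operatorname{Spec} R$ and the minimal model is compatible with localization; the global statement then follows by gluing over the finitely many primes where the fibers are not already smooth. Over a DVR, the automorphism $\tau$ defines a rational self-map $\mathcal{C}\dashrightarrow\mathcal{C}$ that is an isomorphism on the generic fiber. Because $\mathcal{C}$ is a regular two-dimensional scheme, the indeterminacy locus of this map is a finite set of closed points; eliminating it by a sequence of blow-ups produces a regular model $\mathcal{Z}$ together with a birational morphism $p\colon\mathcal{Z}\to\mathcal{C}$ (a composite of blow-downs) and a morphism $q\colon\mathcal{Z}\to\mathcal{C}$ realizing $\tau$. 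The crucial step is to show that $q$ contracts exactly the curves contracted by $p$, so that the induced birational map is in fact an isomorphism.

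This contraction analysis is where I expect the main difficulty to lie, and it is precisely where the genus hypothesis enters. Minimality of $\mathcal{C}$ means that no fiber contains a $(-1)$-curve, and for $g\geq 1$ the relative canonical (dualizing) divisor $\omega_{\mathcal{C}/R}$ is numerically effective on the fibers, i.e.\ has nonnegative intersection with every vertical prime divisor. Combining this numerical positivity with Castelnuovo's contraction criterion and the negative-definiteness of the intersection form on exceptional configurations forces the exceptional loci of $p$ and $q$ to agree, ruling out any genuine birational modification and yielding the desired isomorphism. For $\mathcal{C}=\mathbb{P}^1_R$ this argument breaks down---$\omega$ is anti-effective and elementary transformations produce non-isomorphic relatively minimal models---which is exactly why the theorem requires genus at least one.

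Finally, the statement for the smooth locus is formal. Since $\tilde\tau$ is an isomorphism of $R$-schemes and smoothness over $R$ is preserved under isomorphism, $\tilde\tau$ carries the largest $R$-smooth open subscheme $\mathcal{C}^0$ onto itself; its restriction is the required automorphism $\tau^0\colon\mathcal{C}^0\to\mathcal{C}^0$.
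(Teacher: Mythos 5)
The paper does not actually prove this statement---it is quoted directly from Silverman \cite[IV, Prop.~4.6]{silverman1994advanced}---so there is no internal proof to compare against. Your outline reconstructs the standard argument underlying that reference: reduce to a discrete valuation ring, eliminate the indeterminacy of the induced birational self-map of $\mathcal{C}$ by blow-ups, and use that for $p_a(C)\geq 1$ relative minimality is equivalent to $\omega_{\mathcal{C}/R}$ being numerically effective on vertical divisors (so a $(-1)$-curve contracted by one projection but not the other would force $\omega_{\mathcal{C}/R}\cdot q_*E<0$, a contradiction), which is exactly the Lichtenbaum--Shafarevich uniqueness of the minimal model; the deduction for $\mathcal{C}^0$ is, as you say, formal. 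The sketch is sound, correctly identifies where the genus hypothesis enters and why $\mathbb{P}^1_R$ fails, and honestly flags that the nefness criterion and the contraction analysis carry the real weight.
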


\subsection{Reduction}\label{reduction rp}

 The set of $\overline{K}$-points, $C(\overline{K})$, comes with a natural action of the absolute Galois group
$\text{Gal}(\overline{K}/K)$.
A closed point $ x \in C$  corresponds to a finite extension of  $K$, namely its residue field  $\kappa(x)$. If one chooses a geometric point  $\overline{x} \in C(\overline{K}) $ lying over  $x$, then the orbit of $\overline{x}$ under the Galois action,
\begin{equation*}
\operatorname{Orb}(\overline{x}) = \{ \sigma(\overline{x}) \mid \sigma \in \operatorname{Gal}(\overline{K}/K) \},
\end{equation*}
is in bijection with the closed point $x$. In other words, the closed point  $x$  can be identified with the $\operatorname{Gal}(\overline{K}/K)$-orbit of any of its geometric points.

This identification implies that studying the arithmetic of $ C$  via its closed points is equivalent to studying the Galois orbits in  $C(\overline{K})$. Moreover, the degree of the closed point  $x$, that is the extension degree $[ \kappa(x) : K]$, equals the cardinality of the orbit of any of its lifts in $C(\overline{K})$.

 \begin{definition}
Following \cite{hindry2013diophantine}, for a smooth and projective curve $C$, we construct a reduction map $r_{\pp}$ for any prime $\pp \notin S$. Let $x$ be a closed point of degree $m$, then define the reduction $r_{\pp}(x)$ as
\begin{equation*}
    r_\pp(x) = \overline{\{x\}} \cap C_{\pp}.
\end{equation*}
Since $x$ is a point of degree $m$, its closure $\overline{\{x\}}$ forms a horizontal divisor of degree $m$ on the arithmetic surface $\mathcal{C}$. The intersection of this divisor with the fiber $C_\pp$ yields a divisor of degree $m$ on $C_p$. Extending this process linearly to all closed points in $C$, we obtain a reduction map $r_{\pp}$ from the divisor group of $C$ to the divisor group of $C_\pp$, preserving both degree and effectivity. Thus, we have:
\begin{equation*}
    r_{\pp}:\Div^n_{+}(C) \longrightarrow \Div^n_{+}(C_{\pp}).
\end{equation*}
 \end{definition}
There is a left action of $\Aut_{\oks}(\CC)$ on $\Div(C)$, the divisor group of $C$, given by the pullback of divisors. For any $f \in \Aut_{\oks}(\CC)$ and $D \in \Div(C)$, this action is denoted:

\begin{equation*}
    f\cdot D  \vcentcolon= f_0^*D,
\end{equation*}
where $f_0$ is the induced automorphism on the generic fiber. This action preserves the degree of the divisors. Additionally, we have:
\begin{equation*}
    \Supp (r_\pp(f\cdot D)) = f_\pp^{-1}\big( \Supp (r_\pp(D))\big),
\end{equation*}
where $f_\pp$ is the induced automorphism on the fiber over $\pp$, so this action preserve reducedness as well. Thus, $\Aut_{\oks}(\CC)$ also acts on $ \Omega_{n,\ol K}(C;S)$ and  $ \Omega_{n, K}(C;S)$.

Let $L$ be a field extension of $K$ with ring of integers $\Oo_L$. Define $T(L)$ as the set of all primes in $\Oo_L$ lying above the primes in $S$. When the field $L$ is clear from context, we simply write $T$. Let $\qq$ be a prime in $\Oo_L$ lying above a prime $\mathfrak{p}$ in $\Oo_{K,S}$. Denote:

\begin{align*}
    C_L &= C \otimes_K L,      \\
     C_{\qq} &= \CC \otimes_{\F_{\pp}} \F_\qq,\\
    \CC_{L} &= \CC \otimes_{\Oo_{K,S}} \Oo_{L,T},\\
\end{align*}
where $\F_{\pp} = \oks/\pp$ and similarly $\F_{\qq} = \olt/\qq$.

\begin{lemma}There is a bijection between the set of $L$-rational points $C(L)$ and the set of $\olt$-integral points $\CC(\olt)$. Each $f \in \CC(\olt)$ corresponds uniquely to a point $f_0 \in C(L)$, and vice versa.
\end{lemma}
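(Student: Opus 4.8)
The plan is to recognize this as an instance of the valuative criterion of properness, after a routine base-change reformulation. First I would record the adjunctions $\CC(\Oo_{L,T}) = \Hom_{\Oo_{K,S}}(\Spec\Oo_{L,T},\CC) \cong \Hom_{\Oo_{L,T}}(\Spec\Oo_{L,T},\CC_L)$ and $C(L) = \Hom_K(\Spec L, C) \cong \Hom_L(\Spec L, C_L)$, so that it suffices to exhibit mutually inverse maps between $C_L(L)$ and the set of sections $s\colon \Spec\Oo_{L,T}\to\CC_L$ of the structure morphism $\pi\colon\CC_L\to\Spec\Oo_{L,T}$. The two facts I would isolate are that $\pi$ is proper, being the base change of the proper morphism $\CC\to\Spec\Oo_{K,S}$, and that its generic fibre is $\CC_L\times_{\Oo_{L,T}}L \cong \CC\times_{\Oo_{K,S}}L \cong C_L$.

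One direction is immediate. Given a section $s$, composing with the inclusion of the generic point $\Spec L\hookrightarrow\Spec\Oo_{L,T}$ gives a map $\Spec L\to\CC_L$ lying over the generic point; since it factors through the generic fibre, it determines a point $f_0 := s|_{\Spec L}\in C_L(L) = C(L)$. For the reverse (extension) direction, I would take $f_0\in C_L(L)$, regard it as a morphism $\Spec L\to C_L\hookrightarrow\CC_L$, i.e.\ a rational section of $\pi$ defined at the generic point, and show it extends. For each closed point $\qq$ of $\Spec\Oo_{L,T}$ the local ring $\Oo_{L,\qq}$ is a discrete valuation ring with fraction field $L$, so the valuative criterion applied to the proper morphism $\pi$ furnishes a unique lift $\Spec\Oo_{L,\qq}\to\CC_L$. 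Because $\CC_L$ is of finite type over the Noetherian ring $\Oo_{L,T}$, this morphism out of the local ring spreads out to a morphism on an open neighbourhood of $\qq$, and by separatedness of $\pi$ it agrees with $f_0$ on the overlap. Hence the domain of definition of the rational section is an open subscheme containing the generic point and every closed point; as $\Spec\Oo_{L,T}$ is one-dimensional these are all its points, so $f_0$ extends to a morphism $f\colon\Spec\Oo_{L,T}\to\CC_L$, which is a section since $\pi\circ f$ and the identity agree on the dense generic point of the reduced irreducible, separated base.

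It then remains to verify that the two assignments are mutually inverse. Extending and then restricting returns $f_0$ by construction, while restricting and then extending returns the original section by uniqueness: the valuative criterion gives uniqueness over each $\Oo_{L,\qq}$, and separatedness of $\pi$ forces any two extensions of a given rational map to coincide on all of $\Spec\Oo_{L,T}$. I expect the main obstacle to be precisely the gluing step, since the valuative criterion only produces extensions over the local rings at the codimension-one points, and one must argue that these assemble into a single global section; this is exactly where one uses that the base $\Oo_{L,T}$ is a Dedekind domain (a regular, one-dimensional, hence normal, Noetherian scheme) together with the properness of $\pi$. I would note that smoothness of the model, though part of the standing hypotheses, plays no role in this particular lemma.
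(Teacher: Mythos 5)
Your argument is correct: the paper states this lemma without proof, and your route --- reduce by base change to sections of the proper morphism $\CC_L\to\Spec\,\Oo_{L,T}$, apply the valuative criterion at each closed point of the Dedekind base, spread out and glue using separatedness --- is the standard justification and fills the gap the paper leaves. Your closing observations (that the gluing over the one-dimensional base is the only delicate step, and that smoothness of the model is not needed, only properness) are both accurate.
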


Consider $L$ a Galois field extension of $K$. Then we have the following commutative diagram:
\begin{center}
    \begin{tikzcd}
                                                                      & \Spec L \arrow[rr] \arrow[dd]                  &  & \Spec K \arrow[dd]                          \\
\Spec L \arrow[ru, "\sigma"] \arrow[rrru] \arrow[dd]                  &                                                &  &                                             \\
                                                                      & \Spec \, \Oo_{L,T} \arrow[rr]                          &  & \Spec \, \Oo_{K,S}                                  \\
\Spec \, \Oo_{L,T} \arrow[ru, "\sigma"] \arrow[rrru]                          &                                                &  &                                             \\
                                                                      & {\Spec \, \F_\qq} \arrow[rr] \arrow[uu, hook] &  & {\Spec \, \F_\mathfrak{p}} \arrow[uu, hook] \\
{\Spec \,\F_\qq} \arrow[ru, "\sigma"] \arrow[rrru] \arrow[uu, hook] &                                                &  &                                            
\end{tikzcd}
\end{center}
where $\sigma$ is an element of the decomposition group of $\qq$. By considering the fiber product under the map $\CC \to \Spec \, \mathcal{O}_{K,S}$, we obtain the following commutative diagram:

\begin{equation}\label{diag first}
 \begin{tikzcd}
                                                             &  & C_L \arrow[rrrr, "\pi"] \arrow[dd]          &  &  &  & C \arrow[dd]           \\
C_L \arrow[rru, "\sigma"] \arrow[rrrrrru] \arrow[dd]         &  &                                             &  &  &  &                        \\
                                                             &  & \CC_L \arrow[rrrr]                          &  &  &  & \CC                    \\
\CC_L \arrow[rru, "\sigma"] \arrow[rrrrrru]                  &  &                                             &  &  &  &                        \\
                                                             &  & C_\qq \arrow[rrrr, "\phi"] \arrow[uu, hook] &  &  &  & C_\pp \arrow[uu, hook] \\
C_\qq \arrow[rru, "\sigma"] \arrow[rrrrrru] \arrow[uu, hook] &  &                                             &  &  &  &                       
\end{tikzcd}
\end{equation}

\subsection{Cohomology}

For this section, we closely adhere to Chapter 27 of Milne's notes on algebraic groups, as presented in \cite{milne2014algebraic}. More detailed expositions can also be found in \cite{serre}.

\begin{definition}
    Let $G$ be a group. A $G$-group $A$ is a group $A$ with an action 
    \begin{equation*}
        (\sigma, a) \mapsto \sigma a : G \times A \rightarrow A
    \end{equation*}
of $G $ on the group $A$.
\end{definition}

 Let $A$ be a $G$-group. Then  $H^0(G,A) :=A^G$  the set of elements in $A$ fixed under the action of $G$, i.e.,
    \begin{equation*}
        H^0(G,A)=A^G=\{a\in A \mid \sigma a =a \,\,\,\, \forall \sigma \in G\}.
    \end{equation*}

\begin{definition}
    Let $A$ be a $G$-group. Define $Z(G,A)$   the sets of $1$-cocycles as follows:
    \begin{equation*}
        Z(G, A) = \{ f: G \rightarrow A \mid f(\sigma \tau ) = f(\sigma) \cdot \sigma f(\tau) \,\,\,\,\,\, \forall \sigma , \tau \in G \}.
    \end{equation*}    
\end{definition}
Two $1$-cocycles $f, g \in Z(G,A)$ are equivalent if there exists $c\in A$ such that 
 \begin{equation*}
     g(\sigma) = c^{-1}\cdot  f(\sigma)\cdot  \sigma c \,\,\,\,\,\,\,\,\,\,\,\,\,\forall \sigma \in G.
 \end{equation*}
This is an equivalence relation on the set of $1$-cocycles, and $H^1(G,A)$ is defined to be the set of equivalence classes of $1$-cocyces. In general $H^1(G,A)$  is not a group unless $A$ is commutative, but it has a distinguished element, namely, the class of $1$-cocycles of the form $\sigma \mapsto b^{-1}\cdot \sigma b, \, b\in A$ (the principal $1$-cocycle).

When $  A$ is commutative, $H^i(G,A)$ coincides with the usual cohomology groups for
$i=0, 1$.

\begin{theorem}
    An exact sequence 
    \begin{equation}\label{exact seq}
       \begin{tikzcd}
    1 \arrow[r] & A \arrow[r, "u"] & B \arrow[r, "v"] & C \arrow[r] & 1
\end{tikzcd}
    \end{equation}
    of $G$-groups gives rise to an exact sequence of pointed sets
    \begin{equation*}
        \begin{tikzcd}
1 \arrow[r] & {H^0(G,A)} \arrow[r, "u^0"] & {H^0(G,B)} \arrow[r, "v^0"] & {H^0(G,C)} \arrow[r, "\delta"] & {H^1(G,A)} \arrow[r, "u^1"] & {H^1(G,B)} \arrow[r, "v^1"] & {H^1(G,C)}.
\end{tikzcd}
    \end{equation*}
    More precisely:
    \begin{itemize}
        \item The sequence $  \begin{tikzcd}
1 \arrow[r] & {H^0(G,A)} \arrow[r, "u^0"] & {H^0(G,B)} \arrow[r, "v^0"] & {H^0(G,C)} 
\end{tikzcd}$ is exact as a sequence of groups.
\item There is a natural action of $C^G$ on $H^1(G,A)$.
\item The map $\delta $ sends $c\in C^G$ to $1\cdot c$ where $1$ is the distinguished element of $H^1(G,A)$ 
\item The nonempty fibres of $u^1:H^1(G,A) \rightarrow H^1(G,B)$ are the orbits of $C^G$ on $H^1(G,A)$.

\item The kernel of $v^1$ is the quotient of $H^1(G,A)$ by the action of $C^G$.
 
\end{itemize}
\end{theorem}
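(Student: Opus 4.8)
The plan is to carry out the standard cocycle-level computations of non-abelian cohomology, following the cited treatment in \cite{milne2014algebraic}, bearing in mind that at the $H^1$ level the objects are only pointed sets, so that ``exactness'' must be read as ``the preimage of the distinguished element equals the image of the previous map,'' supplemented by the sharper orbit statements. First I would record functoriality: a morphism of $G$-groups $\phi$ sends a $1$-cocycle $f$ to $\phi\circ f$, which again satisfies the cocycle identity, and carries equivalent cocycles to equivalent ones; applying this to $u$ and $v$ defines $u^1,v^1$ on $H^1$ and, by restriction to invariants, $u^0,v^0$ on $H^0$. Exactness of $1\to H^0(G,A)\to H^0(G,B)\to H^0(G,C)$ as a sequence of groups is then immediate, since taking $G$-fixed points of \eqref{exact seq} is left exact and the identification $A=\ker v$ matches $A^G$ with $\ker(v^0)$ inside $B^G$.

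Next I would construct the connecting map $\delta$. Given $c\in C^G$, surjectivity of $v$ provides $b\in B$ with $v(b)=c$; since $\sigma c=c$, we get $v(b^{-1}\cdot\sigma b)=1$, so $a_\sigma:=b^{-1}\cdot\sigma b$ lies in $u(A)$. A direct substitution verifies the cocycle identity $a_{\sigma\tau}=a_\sigma\cdot\sigma a_\tau$, and replacing $b$ by another lift $b\cdot u(a_0)$ alters $(a_\sigma)$ only within its cohomology class, so $\delta(c):=[a_\sigma]\in H^1(G,A)$ is well defined. I would then define the action of $C^G$ on $H^1(G,A)$ by the twisting recipe, using a lift of $c$ to translate a representing cocycle, verify that it is well defined, and check the compatibility $\delta(c)=c\cdot 1$ asserted in the statement, where $1$ denotes the distinguished class.

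Exactness at the remaining nodes I would establish by cocycle chases. At $H^0(G,C)$: the class $\delta(c)$ is distinguished precisely when $(a_\sigma)$ is a principal cocycle, which, after adjusting the lift $b$ by the corresponding element of $A$, means $c$ lifts to $B^G$, i.e.\ $c\in\mathrm{im}(v^0)$. At $H^1(G,A)$: $u^1[a]$ is trivial exactly when $u\circ a$ is principal in $B$, and unwinding the coboundary exhibits $[a]$ as $\delta$ of a suitable element of $C^G$; the sharper claim that the nonempty fibres of $u^1$ are the $C^G$-orbits follows by comparing two cocycles with the same image under $u^1$ and reading off the element of $C^G$ that relates them. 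At $H^1(G,B)$: $v^1[b]$ is trivial iff $v\circ b$ is principal, iff $[b]$ lies in the image of $u^1$, and the identification of $\ker v^1$ with the quotient $H^1(G,A)/C^G$ then drops out of the preceding orbit description.

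I expect the main obstacle to be exactly these two refined statements, namely that the nonempty fibres of $u^1$ are the $C^G$-orbits and that $\ker v^1=H^1(G,A)/C^G$. These are not formal consequences of set-level exactness; they require the twisting argument, in which one twists the groups $A,B,C$ by a cocycle and compares the $H^1$'s of the twisted objects, together with careful bookkeeping of the non-commutative coboundary relations. Everything else reduces to substitution into the cocycle identity and the defining coboundary relation, so the conceptual weight of the proof sits entirely in setting up the $C^G$-action and the twisting comparison correctly.
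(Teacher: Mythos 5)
Your outline is correct and follows the standard cocycle-level argument; the paper itself states this theorem without proof, quoting it as background from Chapter 27 of Milne's notes (and Serre), and your sketch is precisely the argument given in those cited sources, including the correct identification of the twisting construction as the essential ingredient for the two refined orbit statements. The only cosmetic discrepancy is that you write the $C^G$-action on the distinguished class as $c\cdot 1$ where the paper writes $1\cdot c$, which is a left/right convention issue and not a mathematical gap.
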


In non-commutative cohomology, the term "kernel" refers to the fiber over the distinguished element. This theorem describes only the fiber of $v^1$ that contains the class of principal $1$-cocycles. To describe the other fibers we need to consider appropriate twists of the $G$ action. 

\begin{definition}
    Let $B $ be a $G$-group, and let $S$ be a $G$-set with a left action of $B$ compatible with the action of $G$. Let $f\in Z(G,B)$, and let $\prescript{}{f}{S}$ denote the set $S$ on which $G$ acts by 
    \begin{equation*}
        \sigma * s = f(\sigma)\cdot \sigma s \quad\quad\forall \sigma \in G.
    \end{equation*}
 We say $\prescript{}{f}{S}$ is obtained from $S$ by twisting by a $1$-cocycle $f$.
\end{definition}

Now consider an exact sequence (\ref{exact seq}), and let $f\in Z(G,B)$. The group $B$ acts on itself by inner automorphism leaving $A$ stable, and so we can twist (\ref{exact seq}) by $f$ to obtain an exact sequence 
\begin{equation*}
    \begin{tikzcd}
1 \arrow[r] & \prescript{}{f}{A} \arrow[r, "u"] & \prescript{}{f}{B} \arrow[r, "v"] & \prescript{}{f}{C} \arrow[r] & 1.
\end{tikzcd}
\end{equation*}
The next theorem describes the fiber of $v^1$ containing  $[f]\in H^1(G,B)$.

\begin{theorem}\label{coh theo}
    There is a commutative diagram
    \begin{equation}
        \begin{tikzcd}
{H^0(G,\prescript{}{f}{C} )} \arrow[r] & {H^1(G,\prescript{}{f}{A} )} \arrow[r] & {H^1(G,\prescript{}{f}{B} )} \arrow[d, "\cong"] \arrow[r] & {H^1(G,\prescript{}{f}{C} )} \arrow[d, "\cong"] \\
{H^0(G,C)} \arrow[r, "\delta"]         & {H^1(G,A)} \arrow[r, "u^1"]            & {H^1(G,B)} \arrow[r, "v^1"]                               & {H^1(G,C)}                                     
\end{tikzcd}
    \end{equation}
in which the vertical arrows map the distinguished elements in $H^1(G,\prescript{}{f}{B})$ and $H^1(G,\prescript{}{f}{C})$
    to the class of $[f]$ and $v^1([f])$.
\end{theorem}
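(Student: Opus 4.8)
The plan is to construct the two vertical isomorphisms explicitly by ``multiplication by $f$'', check that the right-hand square commutes, and then obtain the top row for free by applying the previous theorem to the twisted extension $1 \to \prescript{}{f}{A} \to \prescript{}{f}{B} \to \prescript{}{f}{C} \to 1$. Recall that the $G$-action on $\prescript{}{f}{B}$ is the inner twist $\sigma \ast b = f(\sigma)\cdot \sigma b\cdot f(\sigma)^{-1}$, and that the twists on $A$ and $C$ are precisely those inherited from this inner action through $u$ and $v$, so the twisted sequence is exact by construction and no separate comparison of $\prescript{}{f}{A}$ with $A$ is needed.

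First I would write down the candidate map on cocycles, $\theta_B \colon Z(G, \prescript{}{f}{B}) \to Z(G, B)$, defined by $\theta_B(g)(\sigma) = g(\sigma)\, f(\sigma)$. A direct calculation with the twisted cocycle identity $g(\sigma\tau) = g(\sigma)\cdot(\sigma \ast g(\tau))$ shows that $\theta_B(g)$ satisfies the ordinary cocycle identity $\theta_B(g)(\sigma\tau) = \theta_B(g)(\sigma)\cdot \sigma\theta_B(g)(\tau)$; the conjugating factors $f(\sigma)^{-1}f(\sigma)$ produced by the inner twist cancel exactly. Since $h \mapsto (\sigma \mapsto h(\sigma) f(\sigma)^{-1})$ is a two-sided inverse, $\theta_B$ is a bijection of cocycle sets. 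Next I would check that it respects equivalence: if $g'(\sigma) = c^{-1}\, g(\sigma)\,(\sigma \ast c)$ for some $c \in B$, then substituting the inner twist and multiplying on the right by $f(\sigma)$ gives $\theta_B(g')(\sigma) = c^{-1}\,\theta_B(g)(\sigma)\,\sigma c$, which is the ordinary coboundary relation with the same $c$. Hence $\theta_B$ descends to a bijection of pointed sets $H^1(G,\prescript{}{f}{B}) \xrightarrow{\sim} H^1(G,B)$, and the trivial cocycle $g \equiv 1$ maps to $\sigma \mapsto f(\sigma)$, so the distinguished element goes to $[f]$.

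Running the identical argument with the pushed cocycle $v\circ f \in Z(G,C)$ in place of $f$ produces $\theta_C \colon H^1(G,\prescript{}{f}{C}) \xrightarrow{\sim} H^1(G,C)$, $g \mapsto (\sigma \mapsto g(\sigma)\, v(f(\sigma)))$, sending the distinguished element to $[v\circ f] = v^1([f])$. Commutativity of the right-hand square is then a one-line verification: since $v$ is a $G$-equivariant homomorphism, $v\bigl(\theta_B(g)(\sigma)\bigr) = v(g(\sigma))\, v(f(\sigma)) = \theta_C(v\circ g)(\sigma)$, which says $v^1 \circ \theta_B = \theta_C \circ v^1_{\mathrm{tw}}$, where $v^1_{\mathrm{tw}}$ is the map induced by $v$ on the twisted side. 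Finally, the top row is exact because it is the long exact cohomology sequence of the previous theorem applied to the twisted extension, and the diagram deliberately carries no vertical comparison maps on the $\prescript{}{f}{A}$-column or the $H^0$-column.

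I expect the only real obstacle to be the bookkeeping in the two verifications above, namely keeping the left/right placement of $f(\sigma)$ and the conjugation straight in the non-abelian cocycle and coboundary identities. These are purely formal manipulations, and once they are in place the identification of the distinguished elements with $[f]$ and $v^1([f])$, together with the commutativity of the right-hand square, follows immediately from the definitions; this is exactly what is needed to describe the fiber of $v^1$ over $[f]$ as the image of $H^1(G,\prescript{}{f}{A})$.
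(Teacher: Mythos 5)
Your proposal is correct, and it is the standard twisting argument: the explicit bijections $g\mapsto(\sigma\mapsto g(\sigma)f(\sigma))$ on cocycles, the check that they respect coboundaries and send the distinguished elements to $[f]$ and $v^1([f])$, and the one-line commutativity of the right-hand square are exactly the proof given in the sources the paper cites (Milne, Ch.~27; Serre, \emph{Galois Cohomology}, I.5). The paper itself states this theorem without proof, so there is no alternative argument to compare against; your write-up supplies precisely the expected details.
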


\begin{lemma}\label{lem:H1-finite}
Let $G$ be a finite group of order $n$, and let $A$ be a torsion‐free abelian group such that the quotient $A/nA$ is finite.  Then $H^1(G, A)$ is finite for any $G$-group structure on $A$.
\end{lemma}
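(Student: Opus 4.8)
The plan is to exploit the torsion-freeness of $A$ to compare $H^1(G,A)$ with the cohomology of the \emph{finite} module $A/nA$, where $n=|G|$. The starting observation is the classical fact that, for a finite group $G$, the higher cohomology is annihilated by the group order: $n\cdot H^1(G,A)=0$. (This follows from the composite of restriction to the trivial subgroup and corestriction, which on one hand equals multiplication by $n$ and on the other factors through $H^1(\{1\},A)=0$.) Thus every class in $H^1(G,A)$ is $n$-torsion.

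Next, since $A$ is torsion-free, multiplication by $n$ is injective on $A$, and because it is $G$-equivariant it produces a short exact sequence of $G$-modules
\begin{equation*}
0 \longrightarrow A \xrightarrow{\;\cdot n\;} A \longrightarrow A/nA \longrightarrow 0 .
\end{equation*}
Passing to the associated long exact sequence in group cohomology yields, in degree one, the segment
\begin{equation*}
H^1(G,A) \xrightarrow{\;\cdot n\;} H^1(G,A) \longrightarrow H^1(G,A/nA),
\end{equation*}
where the first arrow is the endomorphism of $H^1(G,A)$ induced by multiplication by $n$ on the coefficients, which by additivity of the cohomology functor is simply multiplication by $n$.

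By the first step this endomorphism is identically zero, so exactness forces the second arrow $H^1(G,A)\to H^1(G,A/nA)$ to be injective. It therefore suffices to show the target is finite. But $A/nA$ is finite by hypothesis and $G$ is finite, so the cocycle group $Z^1(G,A/nA)$ sits inside the finite set of all functions $G\to A/nA$; hence $H^1(G,A/nA)$, being a quotient of $Z^1(G,A/nA)$, is finite. Consequently $H^1(G,A)$ embeds into a finite group and is itself finite.

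The only genuinely nontrivial input is the annihilation statement $n\cdot H^1(G,A)=0$; everything else is formal diagram-chasing. I expect the main point to verify carefully is the identification of the induced maps, namely that the endomorphism of $H^1(G,A)$ coming from $\cdot n$ on coefficients really is multiplication by $n$ (immediate from additivity of the functor) and that the long exact sequence is exact at the relevant spot, so that the vanishing of this endomorphism genuinely yields the injectivity of $H^1(G,A)\to H^1(G,A/nA)$.
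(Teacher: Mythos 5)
Your proof is correct and follows essentially the same route as the paper: both use torsion-freeness to get the short exact sequence $0 \to A \xrightarrow{\cdot n} A \to A/nA \to 0$, observe that multiplication by $n$ annihilates $H^1(G,A)$, deduce injectivity of $H^1(G,A) \to H^1(G,A/nA)$ from the long exact sequence, and conclude by finiteness of the target. The only cosmetic difference is that you cite the restriction--corestriction argument for $n\cdot H^1(G,A)=0$, whereas the paper writes out the explicit cocycle identity $f(\sigma)^n = b^{-1}\cdot \sigma b$ with $b=\prod_{\tau\in G} f(\tau)^{-1}$, which is the same fact.
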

\begin{proof}
Since $A$ is torsion‐free, multiplication by $n$
\begin{equation*}
\times n\colon A \longrightarrow A
\end{equation*}
is injective. Thus, we have a short exact sequence of $G$–groups
\begin{equation*}
1 \;\longrightarrow\; A 
\;\xrightarrow{\;\times n\;}\; A 
\;\longrightarrow\; A/nA 
\;\longrightarrow\; 1.
\end{equation*}
Taking cohomology, we obtain the associated long exact sequence
\begin{equation*}
\begin{aligned}
H^0(G,A)\;&\xrightarrow{\;\times n\;}\;H^0(G,A)\;\longrightarrow\;H^0(G,A/nA)\\
&\xrightarrow{\;\delta\;}H^1(G,A)
\xrightarrow{\;\times n\;}H^1(G,A)
\;\longrightarrow\;H^1(G,A/nA)\;\longrightarrow\;\cdots
\end{aligned}
\end{equation*}
Let $f\in Z^1(G,A)$ then for any  $\sigma \in G$   
\begin{equation*}
    f(\sigma)^n = \prod_{\tau \in G} f(\sigma \tau)\cdot \big(\sigma f(\tau)\big)^{-1} =\big[ \prod_{\tau\in G } f(\tau)^{-1} \big]^{-1} \cdot \sigma \big[ \prod_{\tau\in G } f(\tau)^{-1} \big].
\end{equation*}
Therefore, multiplication by $n$ on $H^1(G,A)$ vanishes. This implies that the map ${H^1(G, A)} \ra  {H^1(G, A/nA)} $ is injective. But $A/nA$ is finite, so $H^1(G,A/nA)$ is finite, and hence its subgroup $H^1(G, A)$ is also finite.
\end{proof}

\begin{lemma}\label{cohomology lemma}
Let $G$ be a finite group. Consider the following short exact sequence of $G$-groups.
\begin{equation*}
    \begin{tikzcd}
    1 \arrow[r] & A \arrow[r] & B \arrow[r] & C \arrow[r] & 1
    \end{tikzcd}.
    \end{equation*}
Let $A$ be a torsion free abelian group such that $A/\lvert G \rvert A$ is finite and $C$ be a $G$ group with finite $H^1(G, C)$. Then $H^1(G,B)$ is finite.
\end{lemma}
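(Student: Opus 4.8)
The plan is to read off finiteness of $H^1(G,B)$ from the exact sequence of pointed sets
\[
H^1(G,A)\xrightarrow{\,u^1\,}H^1(G,B)\xrightarrow{\,v^1\,}H^1(G,C)
\]
attached to the given short exact sequence. Since $H^1(G,C)$ is finite by hypothesis, the image of $v^1$ is finite, so $v^1$ has only finitely many nonempty fibers; it therefore suffices to prove that each such fiber is finite, and then $H^1(G,B)$ will be a finite union of finite sets.

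First I would dispose of the distinguished fiber, namely the kernel of $v^1$. By the exact-sequence theorem recalled above, this kernel is exactly the image of $u^1$, which is a quotient of $H^1(G,A)$ by the action of $C^G$. As $A$ is torsion-free abelian with $A/|G|A$ finite, Lemma~\ref{lem:H1-finite} shows $H^1(G,A)$ is finite, whence the kernel of $v^1$ is finite.

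The main obstacle is the remaining (non-distinguished) fibers, which a single exact sequence does not describe because $H^1(G,B)$ is only a pointed set. To reach them I would use the twisting formalism of Theorem~\ref{coh theo}. Given a class in the image of $v^1$, write it as $v^1([f])$ for a cocycle $f\in Z(G,B)$, and twist the sequence by $f$ to get $1\to \prescript{}{f}{A}\to \prescript{}{f}{B}\to \prescript{}{f}{C}\to 1$. By Theorem~\ref{coh theo} the bijection $H^1(G,\prescript{}{f}{B})\cong H^1(G,B)$ carries the distinguished fiber of $(\prescript{}{f}{v})^1$ onto the fiber of $v^1$ over $v^1([f])$; and by the exact-sequence theorem applied to the twisted sequence, that distinguished fiber is a quotient of $H^1(G,\prescript{}{f}{A})$.

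It then remains to note that twisting by $f$ alters only the $G$-action on $A$ (through inner automorphisms of $B$) and not its underlying group: $\prescript{}{f}{A}$ is again the torsion-free abelian group $A$, with $\prescript{}{f}{A}/\,|G|\,\prescript{}{f}{A}=A/|G|A$ finite. Hence Lemma~\ref{lem:H1-finite} applies verbatim to give $H^1(G,\prescript{}{f}{A})$ finite, so every fiber of $v^1$ is finite. Combined with the finiteness of the number of fibers, this yields the finiteness of $H^1(G,B)$.
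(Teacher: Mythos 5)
Your argument is correct and is essentially the paper's own proof, just written out in full detail: the paper likewise combines the finiteness of $H^1(G,C)$ (finitely many fibers of $v^1$) with Theorem~\ref{coh theo} and the observation that every twist $\prescript{}{f}{A}$ still satisfies the hypotheses of Lemma~\ref{lem:H1-finite}, so each fiber is a quotient of a finite set. Your explicit treatment of the distinguished fiber and of why twisting preserves the hypotheses on $A$ fills in exactly the steps the paper leaves implicit.
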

\begin{proof}
The short exact sequence induces a long exact sequence of pointed sets in nonabelian cohomology. Since $H^1(G, C)$ is finite and the first cohomology of any twist of $A$ is finite by Lemma \ref{lem:H1-finite}, the finiteness of $H^1(G, B)$ follows from Theorem~\ref{coh theo}.
\end{proof}

\begin{highlighted}
   Verify that the section title is appropriate from the perspective of algebraic geometry and accurately reflects the mathematical content and goals of this section.
\end{highlighted}
\section{Finiteness for $K$ points with standard model}\label{sec 4}
We adopt the setup introduced in the introduction. Let $K$ be a number field with ring of integers $\mathcal{O}_K$, and let $S$ be a finite set of primes of $\mathcal{O}_K$. Let $C$ be a smooth projective curve of genus $g$ defined over $K$, and let $\mathcal{C}$ be a smooth proper model of $C$ over $\mathcal{O}_{K,S}$, with automorphism group $\Aut_{\mathcal{O}_{K,S}}(\mathcal{C})$.

\begin{highlighted}
    This sentence needs to be checked.
\end{highlighted}
In this section, we prove the following theorem, which establishes the finiteness of orbits in all cases relevant to Theorems~\ref{K-points Theo}, except when $C \cong \mathbb{P}^1_K$ and the model $\mathcal{C}$ is not isomorphic to $\mathbb{P}^1_{\mathcal{O}_{K,S}}$.

\begin{highlighted}
   Mention the equivalent of theorem 1 that we can prove here without reaching the case with not $\P^1$ models. 
\end{highlighted}
\begin{theorem}\label{SplitTheoremAllButOne}
Let $C$ be a smooth projective curve over $K$ with a smooth and proper model $\mathcal{C}$ over $\mathcal{O}_{K,S}$. Assume that one of the following holds:
\begin{itemize}
    \item[\textnormal{(a)}] $C \cong \mathbb{P}^1_K$ and $\mathcal{C} \cong \mathbb{P}^1_{\mathcal{O}_{K,S}}$.
    \item[\textnormal{(b)}] $C$ has genus one.
    \item [\textnormal{(c)}]$C$ has genus at least two.
\end{itemize}
Then the set
\begin{equation*}
    \Aut_{\mathcal{O}_{K,S}}(\mathcal{C}) \backslash \Omega_{n,K}(C;S)
\end{equation*}
is finite.
\end{theorem}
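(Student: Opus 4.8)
The plan is to treat the three cases separately, feeding each into its corresponding Diophantine finiteness theorem: Birch--Merriman in case (a), Siegel in case (b), and Faltings in case (c). The common thread is to use the automorphism group to normalize a given $A\in\Omega_{n,K}(C;S)$ into a shape governed by finitely much data, but the normalization mechanism changes with the geometry. Case (c) is immediate: when $g\ge 2$, Faltings' theorem \cite{Faltings} gives that $C(K)$ is finite, so $\Omega_{n,K}(C;S)$ is a set of $n$-element subsets of a finite set and is therefore itself finite; a fortiori the quotient $\Aut_{\oks}(\CC)\backslash\Omega_{n,K}(C;S)$ is finite, with no input from the automorphism group needed.

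For case (a), with $C\cong\P^1_K$ and $\CC\cong\P^1_{\oks}$, I would use the identification $\Aut_{\oks}(\CC)=\PGL_2(\oks)$ together with the correspondence already arranged in the introduction, but run in the reverse direction. Each $A\in\Omega_{n,K}(\P^1;S)$ is a set of $n$ distinct $K$-rational points that stay distinct under every $r_\pp$ with $\pp\notin S$; taking linear forms vanishing at these points and clearing denominators yields a binary form $f\in\oks[x,y]$ of degree $n$ with root set $A$, and the distinctness-under-reduction condition is exactly the statement $\Delta(f)\in\oks^{\times}$. Hence the map $R$ of the introduction surjects onto $\Omega_{n,K}(\P^1;S)$, intertwining the $\GL_2(\oks)$-action on forms (modulo $(K,S)$-equivalence) with the $\PGL_2(\oks)$-action on point sets. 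For $n\ge 3$, Theorem~\ref{BM in intro} furnishes finitely many $(K,S)$-classes of such forms, so there are finitely many orbits on $\Omega_{n,K}(\P^1;S)$. The small cases $n\in\{1,2\}$, where the discriminant condition is vacuous, I would dispatch directly, reducing finiteness of the orbit space to the finiteness of the class group of $\oks$ (and of $\oks^{\times}/(\oks^{\times})^2$ when $n=2$).

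Case (b) is where the real work sits. If $C(K)=\emptyset$ then $\Omega_{n,K}(C;S)=\emptyset$ and there is nothing to prove, so I assume $C(K)\ne\emptyset$ and fix a base point $O$, regarding $C$ as an elliptic curve $E$. By Theorem~\ref{ext aut} and the group-scheme structure on the smooth proper (hence minimal) model, $\Aut_{\oks}(\CC)\cong\Aut_K(C)$ contains the translation subgroup $E(K)$ with finite index. Given $A=\{z_1,\dots,z_n\}$, I would translate by $-z_1$ so that $O\in A$; the condition $\#r_\pp(A)=n$ then forces $z_i-z_1\not\equiv O\pmod{\pp}$ for all $i$ and all $\pp\notin S$, so each translated point defines an $\oks$-integral point of $\CC\setminus\overline{\{O\}}$. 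Since $E\setminus\{O\}$ is an affine curve of genus one, Siegel's theorem \cite{Siegel} makes the set of such integral points finite; thus after translation the $n-1$ non-origin points of $A$ all lie in one fixed finite set, so only finitely many normalized configurations arise. Consequently $E(K)$ acts with finitely many orbits, and since $E(K)$ has finite index in $\Aut_{\oks}(\CC)$, the full quotient is finite.

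The main obstacle is case (b), specifically the correct passage between the combinatorial reduction condition and Siegel's theorem: one must verify that ``the points stay distinct modulo every $\pp\notin S$'' translates precisely into integrality of the non-origin points on $E\setminus\{O\}$, and that translation by an $E(K)$-point genuinely extends to an $\oks$-automorphism of $\CC$ so that these translations really lie in $\Aut_{\oks}(\CC)$. Both points rest on $\CC$ being a smooth proper, and hence minimal, model, which is exactly what allows Theorem~\ref{ext aut} and Siegel's theorem to interface cleanly.
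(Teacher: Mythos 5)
Your cases (b) and (c) follow the paper's argument essentially verbatim, and your case (a) for $n=1$ also matches (the paper likewise reduces to finiteness of $\Cl_S(K)$ via $\SL_2(\oks)\backslash\P^1(K)\cong\Cl_S(K)$). The genuine gap is in case (a) for $n\ge 3$: you assert that every $A\in\Omega_{n,K}(\P^1;S)$ is the root set of some $f\in\oks[x,y]$ with $\Delta(f)\in\oks^{\times}$, i.e.\ that the map $R$ surjects onto $\Omega_{n,K}(\P^1;S)$. This fails when $\Cl_S(K)$ is nontrivial. Writing $A=\{[\alpha_i:\beta_i]\}$, any integral form with root set $A$ is $c\prod_i(\beta_i x-\alpha_i y)$ with content ideal $(c)\prod_i(\alpha_i,\beta_i)$, and the reduction condition forces $(\alpha_i\beta_j-\alpha_j\beta_i)=(\alpha_i,\beta_i)(\alpha_j,\beta_j)$ as ideals away from $S$; hence $\Delta(f)$ generates $\bigl((c)\prod_i(\alpha_i,\beta_i)\bigr)^{2n-2}$, so an integral form with unit discriminant and root set $A$ exists only if $\prod_i(\alpha_i,\beta_i)$ is principal in $\oks$. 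When it is not, Birch--Merriman over $K$ says nothing about $A$. This is exactly why the paper's proof takes a detour: it chooses coprime coordinates over the Hilbert class field $H$ of $K$ (where these ideals become principal), obtains an injection $\Omega_{n,K}(C;S)\hookrightarrow\O_H^{\times}\backslash V_n(\O_H)$ landing in forms with discriminant in $\O_{H,T}^{\times}$, applies Birch--Merriman over $(H,T)$, and then descends the resulting $\gamma\in\GL_2(\O_{H,T})$ back to $\oks$ by Galois descent, using that an automorphism of $\P^1$ fixing $n\ge 3$ points is the identity to conclude $\sigma^{-1}\gamma\sigma=\gamma$ for all $\sigma\in\Gal(H/K)$. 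Your argument needs this (or some other workaround for non-principal coordinate ideals) to close.

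A smaller but real error: for $n=2$ the discriminant condition is not vacuous --- the condition $\#r_\pp(A)=2$ for all $\pp\notin S$ is precisely $\Delta\in\oks^{\times}$ for the associated quadratic form, and your proposed reduction to $\Cl_S(K)$ and $\oks^{\times}/(\oks^{\times})^2$ is not substantiated. The paper handles $n=2$ by normalizing $\varphi_A=(\alpha\beta'-\alpha'\beta)^{-1}(\beta x-\alpha y)(\beta'x-\alpha'y)$, which lies in $\oks[x,y]$ because $(\alpha,\beta)(\alpha',\beta')=(\alpha\beta'-\alpha'\beta)$ and has discriminant exactly $1$, and then invokes finiteness of the number of $\SL_2(\oks)$-orbits of binary quadratic forms of fixed nonzero discriminant. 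Your $n=2$ sketch should be replaced by an argument of this kind.
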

We complete the proof of Theorem~\ref{SplitTheoremAllButOne} in the following subsections by treating the three cases separately.

\subsection{Genus 0 curves with standard model}

In this subsection, we prove Theorem~\ref{SplitTheoremAllButOne} in the case where the curve is $C \cong \mathbb{P}^1_K$ and the model is the standard one, namely $\mathbb{P}^1_{\mathcal{O}_{K,S}}$. We begin with the case $n=1$. Then $\Omega_{1,K}(\mathbb{P}^1;S)=\mathbb{P}^1(K)$ and $\Aut_{\mathcal{O}_{K,S}}(\mathcal{C})=\PGL_2(\mathcal{O}_{K,S})$. There is a natural bijection $\SL_2(\mathcal{O}_{K,S})\backslash \mathbb{P}^1(K)\cong \Cl_S(K)$, which sends a point $[\alpha:\beta]\in\mathbb{P}^1(K)$ to the ideal class of the fractional ideal $\alpha\mathcal{O}_{K,S}+\beta\mathcal{O}_{K,S}$ (see Theorem~4.2 in \cite{conradSL2}). This shows that the finiteness of the class group implies the desired finiteness.

We now assume that $n \geq 2$. The remaining cases, where the model is not isomorphic to $\mathbb{P}^1_{\mathcal{O}_{K,S}}$, will be treated in a later section using a descent argument and the results established here.

\begin{definition} 
For any commutative ring $R$, let $V_n(R)$ denote the space of homogeneous polynomials of degree $n$ in two variables with coefficients in $R$, known as binary $n$-ic forms. The group $\mathrm{GL}_2(R)$ acts on $V_n(R)$ by linear substitution: for a binary $n$-ic form $f(x, y) \in V_n(R)$ and $\gamma \in \GL_2(R)$,  
the action of $\gamma$ on $f$ is defined by  
\begin{equation*}  
\gamma \cdot f(x, y) \vcentcolon= f((x,y) \gamma). 
\end{equation*}  
\end{definition}  
  
\begin{definition} Suppose that $f(x, y) \in V_n(\oll)$ splits as \begin{equation*} f(x, y) = a_n \prod_{i=1}^n (\beta_i x - \alpha_i y), \end{equation*} where $[\alpha_i : \beta_i]$ are the roots of $f(x, y)$ in $\mathbb{P}^1_{\overline{L}}$. The \emph{discriminant} $\Delta(f)$ is defined by \begin{equation*} \Delta(f) = a_n^{2n-2} \prod_{1 \le i < j \le n} (\alpha_i \beta_j - \alpha_j \beta_i)^2. 
\end{equation*}  
It is known that $\Delta(f) = 0$ if and only if $f$ has a repeated root in its splitting field. Furthermore, $\Delta(f)$ is a relative invariant under the action of $\mathrm{GL}_2(\O_{L})$ meaning that
\begin{equation*}
    \Delta(\gamma\cdot f)= \det(\gamma)^{n(n-1)}\Delta(f),
\end{equation*}
for any $\gamma\in \GL_2(L)$.
\end{definition}

\vspace{.05in}
\noindent 
\textbf{Proof of Theorem~\ref{SplitTheoremAllButOne} in the case $C \cong \mathbb{P}^1_K$, $\mathcal{C} \cong \mathbb{P}^1_{\mathcal{O}_{K,S}}$, and $n=2$:} Let $A = \{a, a'\} \in \Omega_{2, K}(\mathbb{P}^1; S)$. Assume that the coordinates of $a, a' \in \mathbb{P}^1_{\mathcal{O}_{K,S}}$ are given by  
\begin{equation*}
    a = [\alpha:\beta], \quad a' = [\alpha':\beta'],
\end{equation*}
for some $\alpha, \beta, \alpha', \beta' \in \mathcal{O}_{K,S}$. We define a map  
\begin{equation*}
    \varphi: \Omega_{2, K}(\mathbb{P}^1; S) \to V_2(\mathcal{O}_{K,S}),
\end{equation*}
by associating $A$ with the binary quadratic polynomial  
\begin{equation*}
    \varphi_A(x,y) = \frac{1}{\alpha \beta' -  \alpha'\beta} (\beta x - \alpha y)(\beta' x - \alpha' y).
\end{equation*}
Since $A \in \Omega_{2, K}(\mathbb{P}^1; S)$, the coordinates satisfy the ideal relation  
\begin{equation*}
    (\alpha, \beta)(\alpha', \beta') = (\alpha \beta' - \alpha' \beta)
\end{equation*}
in $\mathcal{O}_{K,S}$. This ensures that $\varphi_A(x,y) \in \mathcal{O}_{K,S}[x,y]$ and has discriminant 1.

The space of binary quadratic forms is characterized by a single invariant: the discriminant. As a result, the number of $\SL_2(\mathcal{O}_{K,S})$-orbits of binary quadratic forms with a fixed nonzero discriminant is finite. Applying this to the case where the discriminant is $1$, we conclude that the set of binary quadratic forms in $V_2(\mathcal{O}_{K,S})$ with discriminant $1$ decomposes into finitely many $\SL_2(\mathcal{O}_{K,S})$-orbits. 

Since the map $\varphi$ is injective and respects the action of $\SL_2(\mathcal{O}_{K,S})$, it follows that $\Omega_{2, K}(\mathbb{P}^1; S)$ inherits the finiteness property and decomposes into finitely many orbits under the action of $\SL_2(\mathcal{O}_{K,S})$ and therefore of the action of $\PGL_2(\oks)$.\hfill$\qedsymbol$

\begin{definition}
    Two binary $n$-ic forms $f$ and $g$ in $V_n(\oll)$ are called $(L, T)$-equivalent, if there exists $\gamma \in \mathrm{GL}_2(\olt)$ and $\lambda \in \O_{L,T}^{\times}$ such that
    \begin{equation*}
        \gamma \cdot f = \lambda g.
    \end{equation*}
\end{definition}

\begin{theorem}[Birch-Merriman, \cite{birch1972finiteness}] \label{BM} Let $L$ be a number field, and let $T$ be a finite set of prime ideals in $\mathcal{O}_L$. For any $n \geq 3$, there are only finitely many $(L, T)$-orbits of binary $n$-ic forms $f \in V_n(\mathcal{O}_L)$ satisfying $\Delta(f) \in \mathcal{O}_{L,T}^{\times}$.
\end{theorem}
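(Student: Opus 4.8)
The plan is to reduce the statement to the geometry of the configuration of roots and then to feed it into the finiteness theorem for the $S$-unit equation. First I would pass to a finite Galois extension $L'/L$ over which every form under consideration splits completely, and enlarge $T$ to the set $T'$ of primes of $\mathcal{O}_{L'}$ lying above $T$; because $[L':L]$ is finite and the relevant unit groups and class groups are finitely generated, it suffices to prove finiteness of $(L',T')$-orbits and then descend. Normalizing each root as a primitive integral point $P_i=[\alpha_i:\beta_i]\in\mathbb{P}^1(\mathcal{O}_{L',T'})$, the hypothesis $\Delta(f)\in\mathcal{O}_{L,T}^{\times}\subseteq\mathcal{O}_{L',T'}^{\times}$ combined with the product formula for $\Delta$ forces every pairwise resultant $\alpha_i\beta_j-\alpha_j\beta_i$ to be a unit in $\mathcal{O}_{L',T'}$. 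This is precisely the reduction-theoretic reformulation of the condition that the $n$ roots stay pairwise distinct modulo every prime outside $T'$.

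The heart of the argument is a cross-ratio computation. For any four of the roots the cross-ratio
\[
\lambda=\frac{(\alpha_i\beta_k-\alpha_k\beta_i)(\alpha_j\beta_\ell-\alpha_\ell\beta_j)}{(\alpha_i\beta_\ell-\alpha_\ell\beta_i)(\alpha_j\beta_k-\alpha_k\beta_j)}
\]
is a ratio of units, hence a unit, and the classical identity relating the six cross-ratios of four points shows that $1-\lambda$ is again such a ratio, hence also a unit. Thus $(\lambda,1-\lambda)$ is a solution of the $S$-unit equation $u+v=1$ with $u,v\in\mathcal{O}_{L',T'}^{\times}$, and the finiteness theorem for this equation bounds the number of possible values of $\lambda$. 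Since $n\ge 3$, I can use an element of $\PGL_2(L')$ to send three of the roots to $0,1,\infty$; the position of every remaining root is then a cross-ratio with these three, so finitely many values for each cross-ratio yield finitely many ordered root configurations up to $\PGL_2(L')$, and hence finitely many unordered ones.

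It then remains to reconstruct the forms from the configurations. Once the root set is fixed up to $\PGL_2$, the form $f=c\prod_i(\beta_i x-\alpha_i y)$ is determined by the scalar $c$, and the requirement that $f$ be integral with unit discriminant pins $c$ down up to $\mathcal{O}_{L',T'}^{\times}$; after scaling by an $(L',T')$-unit the form is determined within its $\GL_2(\mathcal{O}_{L',T'})$-orbit, giving finitely many $(L',T')$-classes. The main obstacle, and the step demanding the most care, is the descent from $(L',T')$-equivalence back to $(L,T)$-equivalence: two integral forms that become $\GL_2(\mathcal{O}_{L',T'})$-equivalent need not be $\GL_2(\mathcal{O}_{L,T})$-equivalent, and controlling this discrepancy is a non-abelian Galois-descent problem of exactly the type treated in Section~\ref{sec 3}. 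One shows the fibers of the induced map on orbit sets are finite by identifying them with a first cohomology set of the finite stabilizer of the root configuration, whose finiteness follows from a finitely-generated-group argument in the spirit of Lemma~\ref{lem:H1-finite}. Carrying out this descent completes the proof.
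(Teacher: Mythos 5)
The first thing to say is that the paper does not prove this statement at all: Theorem~\ref{BM} is quoted verbatim from Birch--Merriman \cite{birch1972finiteness} and used as a black box in the proof of Theorem~\ref{SplitTheoremAllButOne} (the introduction explicitly says these classical results are ``identified as special cases without re-proving them''). So there is no in-paper proof to compare against, and your attempt has to be judged on its own. Your outline is the unit-equation proof in the style of Evertse--Gy\H{o}ry rather than Birch and Merriman's original argument, and its core steps are sound: the splitting field of $f$ is unramified outside $T$ and of degree at most $n!$, so by Hermite--Minkowski a single $L'$ works for all forms (you should say this -- it is not automatic that one finite extension splits infinitely many forms); with primitive coordinates (which require $\mathcal{O}_{L',T'}$ to have trivial class group, so enlarge $T'$ or pass to the Hilbert class field, as the paper itself does in its Lemma on $\varphi$) the unit discriminant does force each $\alpha_i\beta_j-\alpha_j\beta_i$ to be a unit; and the Pl\"ucker identity does make $(\lambda,1-\lambda)$ a solution of the unit equation, bounding the cross-ratios.

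The genuine gap is the sentence ``after scaling by an $(L',T')$-unit the form is determined within its $\GL_2(\mathcal{O}_{L',T'})$-orbit.'' Matching cross-ratios only produces an element of $\PGL_2(L')$ carrying one root configuration to the other, so at that point you have bounded the number of $\GL_2(L')$-orbits up to scalar, not the number of $\GL_2(\mathcal{O}_{L',T'})$-orbits, and the latter does not follow from the former. The case $n=3$ makes this concrete: any two triples of distinct points of $\mathbb{P}^1(L')$ are $\PGL_2(L')$-equivalent, so your argument as written would conclude that all binary cubic forms of unit discriminant lie in a single $(L',T')$-class -- which is false, since such classes biject with cubic \'etale algebras unramified outside $T'$, of which there are in general several. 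The missing step is to show that a M\"obius transformation carrying one set of $n\geq 3$ points with pairwise distinct reductions at every $\mathfrak{p}\notin T'$ to another such set is automatically integral at every such $\mathfrak{p}$ (if $\gamma\notin\PGL_2(\mathcal{O}_{\mathfrak{p}})$ then $\gamma$ collapses the complement of one residue disk into a single residue disk, contradicting distinctness of the image reductions when $n\geq 3$), followed by a class-group argument to upgrade everywhere-local integrality to membership in $\PGL_2(\mathcal{O}_{L',T'})$ up to finitely many classes. This integrality step, not the final $L'/L$ Galois descent you single out, is where the arithmetic content of the theorem actually sits; with it inserted, and the two normalization points above made explicit, the proof strategy is viable.
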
 

\begin{lemma}
Let $H$ be the Hilbert class field of $K$. Then there exists an injective map
\begin{equation*} 
    \varphi: \Omega_{n,K}(C; S) \to  \O_{H}^{\times}\backslash V_n(\O_{H}), 
\end{equation*} 
where $\O_{H}^{\times}$ acts by scalar multiplication. The image of $\varphi$ consists of the set of binary $n$-ic forms with discriminants in $\O_{H,T}^{\times}$. 
\end{lemma}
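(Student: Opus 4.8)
The plan is to map a configuration $A=\{[\alpha_1:\beta_1],\dots,[\alpha_n:\beta_n]\}\in\Omega_{n,K}(\mathbb{P}^1;S)$ to the class of the binary $n$-ic form $f_A=\prod_{i=1}^{n}(\beta_i x-\alpha_i y)$, multiplying together the linear forms cutting out the points, exactly as in the $n=2$ case treated above. The only obstruction to doing this integrally is that a $K$-point $[\alpha_i:\beta_i]$ need not admit coordinates in $\O_K$ generating the unit ideal, since the ideal class of $(\alpha_i,\beta_i)$ may be nontrivial; this is precisely what passing to the Hilbert class field repairs. First I would clear denominators so that $\alpha_i,\beta_i\in\O_K$, and then invoke the principal ideal theorem: the extension $(\alpha_i,\beta_i)\O_H$ is principal, say $(\gamma_i)$, so replacing $(\alpha_i,\beta_i)$ by $(\alpha_i/\gamma_i,\beta_i/\gamma_i)$ yields representatives with coordinates in $\O_H$ that generate the unit ideal of $\O_H$. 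I then set $\varphi(A)$ to be the class of $f_A\in V_n(\O_H)$ in $\O_H^\times\backslash V_n(\O_H)$.

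Next I would verify well-definedness. Reordering the points only permutes the linear factors, leaving $f_A$ unchanged. For a fixed point, any two coprime $\O_H$-representatives differ by a common scalar $u_i\in H^\times$; since both pairs generate the unit ideal, $u_i\in\O_H^\times$, so the two products differ by $\prod_i u_i\in\O_H^\times$. Hence $\varphi(A)$ is independent of all choices and gives a well-defined element of $\O_H^\times\backslash V_n(\O_H)$.

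The heart of the proof is the discriminant computation. By the product formula for $\Delta$, one has $\Delta(f_A)=\prod_{i<j}(\alpha_i\beta_j-\alpha_j\beta_i)^2$, so it suffices to show each factor $\alpha_i\beta_j-\alpha_j\beta_i$ lies in $\O_{H,T}^\times$. Each such factor is an element of $\O_H$, hence of non-negative valuation everywhere, and I must show its valuation vanishes at every prime $\qq$ outside $T$. Such a $\qq$ lies over some $\pp\notin S$. Because the chosen coordinates are coprime in $\O_H$, reduction modulo $\qq$ carries $[\alpha_i:\beta_i]$ to a genuine point of $\mathbb{P}^1(\F_\qq)$, and two of them coincide modulo $\qq$ exactly when $\alpha_i\beta_j-\alpha_j\beta_i\equiv 0\pmod{\qq}$. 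By compatibility of reduction with base change (the base-change diagram in Section~\ref{sec 2}), the reductions modulo $\qq$ of the $K$-points of $A$ are the images in $\mathbb{P}^1(\F_\qq)$ of their reductions modulo $\pp$, which are $n$ distinct points of $\mathbb{P}^1(\F_\pp)\subseteq\mathbb{P}^1(\F_\qq)$ since $A\in\Omega_{n,K}(\mathbb{P}^1;S)$. Thus $v_\qq(\alpha_i\beta_j-\alpha_j\beta_i)=0$ for every $\qq\notin T$, so each factor is a unit of $\O_{H,T}$ and $\Delta(f_A)\in\O_{H,T}^\times$, placing the image of $\varphi$ inside the set of forms with discriminant in $\O_{H,T}^\times$. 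Injectivity is then immediate: the roots of $f_A$ in $\mathbb{P}^1(\overline{K})$ are exactly the points of $A$, so if $\varphi(A)=\varphi(B)$ then $f_A=\lambda f_B$ with $\lambda\in\O_H^\times$, and scalar-equivalent forms share the same root set, forcing $A=B$.

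The main obstacle is conceptual rather than computational: recognizing that the failure to represent $K$-points of $\mathbb{P}^1$ by coprime integral coordinates is measured by the class group, and that the principal ideal theorem over $H$ is exactly the tool that produces a single integral form carrying the transparent discriminant formula. Once the coordinates are coprime over $\O_H$, the equivalence between the reduction condition defining $\Omega_{n,K}(\mathbb{P}^1;S)$ and the unit-discriminant condition is transparent, which is what will let us feed the image of $\varphi$ into the Birch--Merriman theorem (Theorem~\ref{BM}) to extract finiteness.
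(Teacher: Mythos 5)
Your proof is correct and follows essentially the same route as the paper: the same map via coprime $\O_H$-coordinates obtained from the principal ideal theorem, the same well-definedness and injectivity arguments modulo $\O_H^\times$, and the same conclusion that the reduction condition defining $\Omega_{n,K}(\mathbb{P}^1;S)$ forces $\Delta(f_A)\in\O_{H,T}^\times$. The only cosmetic difference is that the paper verifies the discriminant claim by a two-case analysis on whether the $\beta_i'$ are $\qq$-units, whereas you invoke compatibility of the reduction map with base change from $\pp$ to $\qq$; these are the same computation.
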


\begin{proof}
Let $A = \big\{[\alpha_1 : \beta_1], \ldots, [\alpha_n : \beta_n]\big\}$ be an element of $\Omega_{n,K}(C; S)$, where $\alpha_i, \beta_i \in \ok$ for each $i$. We define $\varphi_A$ as the binary form  
\begin{equation*}
    \prod_{i} (\beta_i' x - \alpha_i' y),
\end{equation*}
where $[\alpha_i' : \beta_i'] \in \mathbb{P}^{1}_\oh$ are chosen such that $\gcd(\alpha_i', \beta_i') = 1$ in $\oh$ and satisfy
\begin{equation*}
    [\alpha_i : \beta_i] = [\alpha_i' : \beta_i']
\end{equation*}
in $\mathbb{P}^1_H$. Since $H$ is the Hilbert class field of $K$, every ideal of $\mathcal{O}_K$ is principal in $\mathcal{O}_H$, guaranteeing the existence of such coprime representatives. Consequently, $\varphi$ is well-defined up to $\mathcal{O}_H^{\times}$-equivalence: any other choice of $\alpha_i'$ and $\beta_i'$ results in a binary form differing only by a unit factor, which we quotient out by $\mathcal{O}_H^{\times}$. This also ensures that $\varphi$ is injective.

Now, we analyze the discriminant of $\varphi_A$. Let $\qq$ be a prime of $\oh$ lying above a prime $\pp$ of $\ok$, and suppose that $\qq \mid \Delta(\varphi_A)$. Then there exist indices $i$ and $j$ such that
\begin{equation}\label{qq-equation}
    \alpha_i'\beta_j' \equiv \alpha_j'\beta_i' \pmod{\qq}.
\end{equation}
We consider two cases:

\begin{itemize}
\item[\textnormal{(a)}] Both $\beta_i'$ and $\beta_j'$ are units in $(\oh)_\qq$:  
    Then we can rewrite equation \eqref{qq-equation} as  
    \begin{equation*} 
        \alpha_i' (\beta_i')^{-1} \equiv \alpha_j' (\beta_j')^{-1} \pmod{\qq}. 
    \end{equation*}  
    By construction, $\alpha_i' (\beta_i')^{-1} = \alpha_i \beta_i^{-1}$ and $\alpha_j' (\beta_j')^{-1} = \alpha_j \beta_j^{-1}$, meaning that these values are in $(\ok)_{\pp}$ and have the same reduction modulo $\pp$. Thus, $\pp \in S$ and $\qq \in T$.

\item[\textnormal{(b)}] $\beta_i'$ is divisible by $\qq$: 
    Then equation \eqref{qq-equation} implies that $\qq$ divides $\alpha_i' \beta_j'$. Since $\alpha_i'$ and $\beta_i'$ are coprime, it follows that $\qq$ divides $\beta_j'$. Consequently, both $\alpha_i'$ and $\alpha_j'$ are units in $(\O_{H})_{\qq}^{\times}$, and we rewrite equation \eqref{qq-equation} as  
    \begin{equation*}
        (\alpha_i')^{-1} \beta_i' \equiv (\alpha_j')^{-1} \beta_j' \pmod{\qq}.
    \end{equation*}
    Similarly, we obtain $(\alpha_i)^{-1} \beta_i \equiv (\alpha_j)^{-1} \beta_j \pmod{\pp}$, ensuring that $\pp \in S$ and $\qq \in T$.
\end{itemize}
Thus, the image of $\varphi$ is actually is set of binary forms with discriminant in $\O_{H,T}^{\times}$.
\end{proof}

\vspace{.1in}
\noindent 
\textbf{Proof of Theorem~\ref{SplitTheoremAllButOne} in the case $C \cong \mathbb{P}^1_K$, $\mathcal{C} \cong \mathbb{P}^1_{\mathcal{O}_{K,S}}$, and $n\geq3$:}  By Birch–Merriman \cite{birch1972finiteness}, there are only finitely many $(H, T)$-equivalence classes in $V_n(\oh)$ with discriminants in $\O_{H,T}^{\times}$. Consequently, we can select elements $A_1, \ldots, A_M$ in $\Omega_{n,K}(C; S)$ such that for any $A \in \Omega_{n,K}(C; S)$, there exist an index $i \in \{1, \ldots, M\}$ and $\gamma \in \mathrm{GL}_2(\O_{H,T})$ satisfying  

\begin{equation*} 
    c \, \varphi_{A} = \gamma \cdot \varphi_{A_i}, 
\end{equation*}  
where $c$ is a constant in $\O_{H,T}^{\times}$. By abuse of notation, we consider $\gamma$ as an automorphism of $\mathbb{P}^1_{\oht}$ that moves $A$ to one of the representatives $A_i$.  

To complete the proof, we now show that $\gamma$ is actually defined over $\oks$. Consider any $\sigma \in \Gal(H/K)$. Since $\sigma$ fixes every element of $\Omega_{n,K}(C; S)$, it follows that  
\begin{equation*} 
    \sigma^{-1} \gamma \sigma(A) = A_i.
\end{equation*}  
By the uniqueness property of automorphisms of $\mathbb{P}^1_{\oht}$ fixing three points, we deduce that  
\begin{equation*} 
    \sigma^{-1} \gamma \sigma = \gamma.
\end{equation*}  
Thus, by Galois descent, $\gamma$ descends to an automorphism of $\mathbb{P}^1_{\oks}$. This completes the proof. \hfill$\qedsymbol$

\subsection{Elliptic curves}
We now consider the case where $C$ is a genus 1 curve. Assuming that $\Omega_{n,K}(C; S)$ is nonempty, this implies that $C$ is isomorphic to an elliptic curve over $K$. In this case, the automorphism group of the model $\mathcal{C}$ over $\mathcal{O}_{K,S}$ is isomorphic to $\text{Aut}_K(C)$. Furthermore, the group $\text{Aut}_K(C)$ contains $C(K)$ via the action of translations. 

We prove the theorem by considering two cases: $n=1$ and $n\geq 2$.  

\vspace{.05in}
\noindent 
\textbf{Proof of Theorem \ref{SplitTheoremAllButOne} for the elliptic curves $n=1$:}  
In this case, we have  
\begin{equation*} 
\Omega_{1,K}(C; S) = C(K). 
\end{equation*}  
Since $C(K)$ modulo itself has only one equivalence class. Thus, the following set
\begin{equation*} 
C(K)\backslash \Omega_{1,K}(C; S) 
\end{equation*}  
consists of a single element, proving finiteness in this case. \hfill$\qedsymbol$ 

\vspace{.05in}
\noindent 
\textbf{Proof of Theorem \ref{SplitTheoremAllButOne} for the elliptic curves $n\geq 2$:} For any set $A = \{a_1, a_2, \ldots, a_n\} \in \Omega_{n,K}(C;S)$, we use the translation action of $C(K)$ to send $a_1$ to the identity element $O$ of $C$. This allows us to consider the translated set  
\begin{equation*} 
\{O, a_2 - a_1, \ldots, a_n - a_1\}. 
\end{equation*}  
Since $A$ satisfies the local conditions defining $\Omega_{n,K}(C;S)$, the elements $a_i - a_1$ for $i \geq 2$ must not be congruent to $O$ modulo any prime $\mathfrak{p} \notin S$. This implies that each $a_i - a_1$ is a nonzero $\mathcal{O}_{K,S}$-point on the elliptic curve $\CC \backslash \{O\}$.  

By Siegel's theorem on integral points, the set of $\mathcal{O}_{K,S}$-points on an elliptic curve is finite. Therefore, the differences $a_i - a_1$ can only take finitely many values. Thus, the quotient  
\begin{equation*} 
C(K)\backslash \Omega_{n,K}(C; S)
\end{equation*}  
is finite. This completes the proof of Theorem \ref{K-points Theo}. \hfill$\qedsymbol$

\subsection{Curves of genus at least two}  
By Faltings' theorem, the set of $K$-rational points $C(K)$ is finite when $C$ has genus at least $2$. Since $\Omega_{n, K}(C; S)$ consists of subsets of $C(K)$ satisfying local conditions, it follows that $\Omega_{n, K}(C; S)$ is finite. Consequently, the quotient  
\begin{equation*}  
\Aut_{\mathcal{O}_{K,S}}(\mathcal{C}) \backslash \Omega_{n, K}(C; S)  
\end{equation*}  
is also finite, completing the proof in this case. 

\section{Proof of Theorems}\label{sec 3}
To prove Theorem~\ref{Kbar-points Theo}, we first establish finiteness over a sufficiently large number field extension of $K$, and then descend to the base field. Specifically, we consider a finite extension $L/K$ over which all relevant points become $L$-rational points, allowing us to apply Theorem~\ref{SplitTheoremAllButOne} over $\mathcal{O}_{L,T}$ and deduce finiteness at that level.

More precisely, the natural base change induces an inclusion
\begin{equation*}
\iota: \Omega_{n,\overline{K}}(C;S) \hookrightarrow \Omega_{n,L}(C_L;T),
\end{equation*}
where $T$ is the set of primes of $\mathcal{O}_L$ lying above those in $S$. By choosing $L$ large enough, Theorem~\ref{SplitTheoremAllButOne} guarantees that the set $\Omega_{n,L}(C_L;T)$ breaks into finitely many orbits under the action of $\Aut_{\mathcal{O}_{L,T}}(\mathcal{C}_L)$.

To complete the proof, we descend this finiteness result to $\mathcal{O}_{K,S}$. Rather than analyzing the structure directly over $K$, we use the established finiteness at the level of $\mathcal{O}_{L,T}$. The key step is to show that each orbit at this larger level decomposes into finitely many orbits at the base level, ensuring finiteness over $K$. Specifically, for a fixed $A \in \Omega_{n,\overline{K}}(C;S)$, we establish that  
\begin{equation*}
\Aut_\olt(\CC_L) \cdot \iota(A) \cap \iota(\Omega_{n,\overline{K}}(C; S)),
\end{equation*}  
splits into finitely many orbits under $\iota(\Aut_{\oks}(\CC))$. This allows us to complete the proof of Theorems~\ref{K-points Theo} and \ref{Kbar-points Theo} by leveraging the finiteness result established in Theorem~\ref{SplitTheoremAllButOne}.

\subsection{Galois descent}
In this section, we establish the existence of a sufficiently large number field over which all points in our family are defined. We then relate the number of orbits in  
\begin{equation*}
\Aut_\oht(\CC_H) \cdot \iota(A) \cap \iota(\Omega_{n,\overline{K}}(C; S))
\end{equation*}  
to the first Galois cohomology group of the stabilizer of $\iota(A)$ in $\Aut_{\mathcal{O}_{H,T}}(\CC_H)$ for a suitably large number field $H$.

\begin{lemma}\label{discriminant-res}
There exists a finite Galois extension $L/K$ such that  
\begin{equation*}  
    \Omega_{n, \overline{K}}(C; S) \hookrightarrow \Omega_{n, L}(C_L; T),  
\end{equation*}
for the finite set of primes $T$ of $\mathcal{O}_L$ lying above $S$, and the map is induced by base change.  
\end{lemma}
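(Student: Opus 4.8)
The plan is to produce the extension $L$ by adjoining to $K$ the coordinates of every point that can occur in any element of $\Omega_{n,\overline{K}}(C;S)$, and then take the Galois closure. The first step is to observe that each $A \in \Omega_{n,\overline{K}}(C;S)$ is a Galois-invariant $n$-element subset of $C(\overline{K})$, so each point $x \in A$ is a closed point of $C$ of degree at most $n$; the residue field $\kappa(x)$ is a finite extension of $K$ of degree $\le n$. Thus every individual element of $\Omega_{n,\overline{K}}(C;S)$ is defined over a number field of bounded degree over $K$. The subtlety is that there are infinitely many such sets $A$ a priori, so I cannot simply adjoin coordinates set-by-set; I need a single $L$ that works uniformly.

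The key observation resolving this is that the degree of the residue fields is uniformly bounded by $n$, and there are only finitely many extensions of a number field $K$ of bounded degree that are unramified outside a fixed finite set of primes. Here the ramification is controlled precisely by the hypothesis $\#r_{\mathfrak{p}}(A)=n$ for all $\mathfrak{p}\notin S$: since the $n$ geometric points of $A$ stay distinct under reduction at every $\mathfrak{p}\notin S$, the closed points comprising $A$ are étale over $\Spec\,\mathcal{O}_{K,S}$, so their residue fields are unramified outside $S$. Therefore I would invoke the Hermite--Minkowski theorem: the compositum $L_0$ of all extensions of $K$ of degree at most $n$ unramified outside $S$ is a finite extension of $K$. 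Let $L$ be the Galois closure of $L_0$ over $K$; this is still a finite Galois extension, and by construction every point of every $A \in \Omega_{n,\overline{K}}(C;S)$ becomes $L$-rational.

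Granting that $L$, the inclusion of sets is then essentially formal. Given $A \in \Omega_{n,\overline{K}}(C;S)$, all $n$ points of $A$ lie in $C(L)$, so $A$ is an $n$-element subset of $C(L)$ with $\#A=n$. The reduction condition $\#r_{\mathfrak{p}}(A)=n$ for all $\mathfrak{p}\notin S$ transfers to the condition $\#r_{\mathfrak{q}}(A)=n$ for all primes $\mathfrak{q}\notin T$: a prime $\mathfrak{q}$ of $\mathcal{O}_L$ lies over some $\mathfrak{p}\notin S$, and the compatibility of reduction with base change (from the commutative diagram~\eqref{diag first}, where $\phi$ intertwines $r_{\mathfrak{q}}$ and $r_{\mathfrak{p}}$) shows that distinctness of reductions at $\mathfrak{p}$ forces distinctness at $\mathfrak{q}$. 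Hence $A \in \Omega_{n,L}(C_L;T)$, and this assignment is visibly injective since it is the identity on underlying point sets. This defines the base-change map $\iota$.

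The main obstacle is the uniform bound step: one must justify that only finitely many number fields arise as residue fields, which requires both the degree bound (immediate from $\#A=n$) and the unramifiedness outside $S$ (from the étale/distinct-reduction hypothesis), before Hermite--Minkowski applies. The remaining verifications—that membership in $\Omega_{n,L}(C_L;T)$ holds and that $\iota$ is injective—are routine once the compatibility of reduction with base change recorded in Section~\ref{reduction rp} is cited.
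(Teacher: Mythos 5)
Your proposal is correct and follows essentially the same route as the paper: both deduce from the distinct-reduction hypothesis that each residue field $\kappa(x)$ is unramified outside $S$ and of degree at most $n$, invoke the Hermite--Minkowski-type finiteness (the paper cites Lemma~8 of Birch--Merriman for fields of bounded degree with discriminant in $\mathcal{O}_{K,S}^{\times}$), and take the Galois closure of the compositum. The only difference is presentational: the paper spells out the unramifiedness via the count $[L_x:K]=\sum_i f(\pp_i/\pp)$ of geometric points in the special fiber, while you assert the \'etale property directly; both are the same argument.
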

\begin{proof}
Let $A \in \Omega_{n,\overline{K}}(C;S)$, and let $x \in A$. Then $x$ is defined over a finite extension $L_x = \kappa(x)$ of $K$. This closed point defines a horizontal divisor on the arithmetic surface $\mathcal{C} \to \operatorname{Spec} \mathcal{O}_{K,S}$, and its intersection with the fiber $\mathcal{C}_\pp$ over a prime $\pp \notin S$ gives a finite set of points in the special fiber.

Let $\pp_1, \dots, \pp_r$ be the primes of $\mathcal{O}_{L_x}$ lying above $\pp$, and let $f(\pp_i/\pp) := [\kappa(\pp_i) : \kappa(\pp)]$ denote the inertia degree. Then the number of geometric points in $\overline{\{x\}} \cap \mathcal{C}_\pp$ is bounded by
\begin{equation*}
\sum_{i=1}^r f(\pp_i / \pp).
\end{equation*}
Since $A \in \Omega_{n,\overline{K}}(C;S)$, so the closure of each $x \in A$ meets $\mathcal{C}_\pp$ in exactly $[L_x : K]$ geometric points. Therefore,
\begin{equation*}
[L_x : K] = \sum_{i=1}^r f(\pp_i / \pp),
\end{equation*}
which implies that $\pp$ is unramified in $L_x$. As this holds for all $\pp \notin S$, we conclude that $L_x/K$ is unramified outside $S$.

Moreover, since $A$ has $n$ elements, each point is defined over a field extension of degree at most $n$. Moreover, the set of possible fields $L_x$ with discriminant in $O_{K,S}^{\times}$ and degree less than $n$ is finite by Lemma 8 of~\cite{birch1972finiteness}. Let $L$ be the Galois closure of the compositum of all such fields $L_x$. Then $L/K$ is a finite Galois extension, and all points in $A$ are defined over $L$, so $A \in \Omega_{n,L}(C_L;T)$. This completes the proof of the lemma.
\end{proof}

For the remainder of this section, let $H/K$ be a finite Galois extension with Galois group $G = \Gal(H/K)$, chosen sufficiently large so that the natural map  
\begin{equation*} 
    \iota: \Omega_{n, \overline{K}}(C; S) \hookrightarrow \Omega_{n, H}(C_H; T),
\end{equation*}
is well-defined, where $\iota$ is induced by the base change $\Spec\, \mathcal{O}_{H,T} \to \Spec\, \mathcal{O}_{K,S}$. We additionally assume that $H$ contains the Hilbert class field of the field $L$ appearing in Lemma~\ref{discriminant-res}. The set $T$ denotes the set of primes of $\mathcal{O}_H$ lying above those in $S$.

\begin{definition}
Let $A \in \Omega_{n, \overline{K}}(C; S)$. Define the stabilizer of $A$ in the automorphism group of $\Aut_{\mathcal{O}_{H,T}}(\mathcal{C}_H)$:
\begin{equation*}
    M_A := \left\{ F \in \Aut_{\mathcal{O}_{H,T}}(\mathcal{C}_H) \mid F(A) = A \right\}.
\end{equation*}
Since $A$ is Galois-invariant, $M_A$ naturally carries an action of $G = \Gal(H/K)$, making it a $G$-group.
\end{definition}

\begin{proposition}\label{finite coh}
     Let $A\in \Omega_{n,\overline{K}}(C; S)$, then
    \begin{equation*}
        \#\,H^1(G,M_A)<\infty.
    \end{equation*}
\end{proposition}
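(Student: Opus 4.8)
The plan is to analyze $M_A$ through its action on the $n$-element set $A$ and to reduce the non-abelian statement to the abelian computation of Lemma~\ref{lem:H1-finite} via the dévissage of Lemma~\ref{cohomology lemma}. Because $A$ is Galois-stable, every $F \in M_A$ permutes the points of $A$, giving a $G$-equivariant homomorphism $\rho : M_A \to \Sym(A) \cong S_n$. Writing $M_A^0 := \ker \rho$ for the subgroup fixing each point of $A$, this produces a short exact sequence of $G$-groups
\begin{equation*}
1 \longrightarrow M_A^0 \longrightarrow M_A \stackrel{\rho}{\longrightarrow} P \longrightarrow 1,
\qquad P := \rho(M_A) \le S_n .
\end{equation*}
The quotient $P$ is finite, so $H^1(G,P)$ is automatically finite (there are only finitely many maps $G \to P$). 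It therefore suffices, through the long exact sequence of pointed sets and Theorem~\ref{coh theo}, to show that $H^1(G, {}_f M_A^0)$ is finite for every twist ${}_f M_A^0$: since $P$ is finite, the map $v^1 : H^1(G,M_A) \to H^1(G,P)$ has finite image, hence finitely many nonempty fibres, and each such fibre is covered by some $H^1(G, {}_f M_A^0)$.

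Next I would identify $M_A^0$ in each genus, using that over $H$ — which contains the Hilbert class field of the field $L$ from Lemma~\ref{discriminant-res}, so that $\mathcal{C}_H \cong \mathbb{P}^1_{\oht}$ in genus $0$ — the group $\Aut_{\oht}(\mathcal{C}_H)$ is explicit. If $C$ has genus at least $2$, then $\Aut_H(C_H)$ is finite, so $M_A^0$ is finite. If $C$ has genus $1$, then any automorphism fixing a point of $A$ has the form $t_P \circ \alpha$ with $\alpha$ in the finite group $\Aut(C_H,O)$ and $P$ determined by $\alpha$, so $M_A^0$ is again finite. In genus $0$ with $n \ge 3$, an element of $\PGL_2$ fixing three points is trivial, so $M_A^0 = 1$; and for $n = 2$, after moving the two $H$-rational points of $A$ to $0$ and $\infty$, the pointwise stabilizer is the diagonal torus, i.e. $M_A^0 \cong \oht^{\times}$. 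In every case $M_A^0$ is a finitely generated abelian group, and the same holds for each twist ${}_f M_A^0$, which has the same underlying group.

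The remaining point — and the main technical obstacle — is that $M_A^0 \cong \oht^{\times}$ need not be torsion-free (it contains the roots of unity of $H$), so Lemma~\ref{lem:H1-finite} does not apply verbatim. I would resolve this by splitting off the torsion: the exact sequence $0 \to (M_A^0)_{\mathrm{tors}} \to M_A^0 \to F \to 0$ with $F$ free of finite rank yields, upon taking cohomology, finiteness of $H^1(G,M_A^0)$ from finiteness of $H^1(G,F)$ (by Lemma~\ref{lem:H1-finite}, since $F/|G|F$ is finite) together with finiteness of $H^1\bigl(G,(M_A^0)_{\mathrm{tors}}\bigr)$ (a finite group with finite $G$). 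Equivalently, for any finitely generated abelian $G$-module $M$ with $G$ finite, the group $Z^1(G,M) \subseteq \mathrm{Map}(G,M)$ is finitely generated and $H^1(G,M)$ is killed by $|G|$ via the averaging identity in the proof of Lemma~\ref{lem:H1-finite}, hence finite. Applying this to every twist ${}_f M_A^0$ and combining it with the finiteness of $H^1(G,P)$ through Lemma~\ref{cohomology lemma} — whose torsion-free hypothesis is here replaced by the finitely-generated-abelian argument just given — yields $\#\,H^1(G,M_A) < \infty$, as claimed.
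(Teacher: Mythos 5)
Your overall strategy --- factoring $M_A$ through its permutation action on $A$ via $1 \to M_A^0 \to M_A \to P \to 1$ with $P \le \Sym(A)$ finite, and then reducing to the twists of the pointwise stabilizer $M_A^0$ --- is sound and in fact more uniform than the paper's proof, which simply runs a direct case analysis on $M_A$ itself (finite in genus $\ge 2$, genus $1$, and genus $0$ with $n\ge 3$; an extension $1 \to \oht^\times \to M_A \to \Z/2\Z \to 1$ for $n=2$; the Borel subgroup for $n=1$). Your handling of the torsion in $\oht^\times$ is also correct and more self-contained than the paper's citation of Milne: a finitely generated abelian $G$-module has $Z^1(G,M)$ finitely generated and $H^1(G,M)$ killed by $\lvert G\rvert$, hence finite, and this is insensitive to twisting.

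However, there is a genuine gap: you never treat genus $0$ with $n=1$, and your blanket claim that ``in every case $M_A^0$ is a finitely generated abelian group'' is false precisely there. For $n=1$ the set $A$ is a single point, $P$ is trivial, and $M_A^0 = M_A$ is the full stabilizer of a point of $\mathbb{P}^1_{\oht}$ in $\PGL_2(\oht)$, i.e.\ a Borel subgroup $B$, which is \emph{not} abelian: it sits in an extension $1 \to \oht \to B \to \oht^\times \to 1$ (unipotent radical by torus). Your reduction to $M_A^0$ is vacuous in this case and leaves the finiteness of $H^1(G,B)$ (and of its twists) unproved. Closing the gap requires exactly one more d\'evissage step, which is what the paper's Lemma~\ref{cohomology lemma} supplies: $\oht$ is torsion-free with $\oht/\lvert G\rvert\oht$ finite, so every twist of it has finite $H^1$ by Lemma~\ref{lem:H1-finite}, and $H^1(G,\oht^\times)$ is finite by your own finitely-generated-abelian argument; Theorem~\ref{coh theo} then gives finiteness of $H^1(G,B)$. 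With that case added, your proof is complete.
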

\begin{proof}
We analyze the group $M_A \subset \Aut_{\mathcal{O}_{H,T}}(\mathcal{C}_H)$ and prove the finiteness of $H^1(G, M_A)$ case by case, according to the genus of the curve and the number of points in $A$.

\medskip
\noindent \textbf{Case 1:} Genus $g \geq 2$.\\
In this case, $\Aut_H(C_H)$ is finite, and hence so is $M_A \subset \Aut_H(C_H)$. It follows that $H^1(G, M_A)$ is finite.

\medskip
\noindent \textbf{Case 2:} Genus $g = 1$\\
In this case, the finiteness of the group $M_A$ can be established as follows. Since $A \subset C(H)$ contains finitely many points, the subgroup of translations preserving $A$, namely $C(H) \cap M_A$, is finite. Moreover, the quotient $M_A / (C(H) \cap M_A)$ embeds into the quotient $\Aut_H(C_H) / C(H) \cong \Aut(C_H, O)$, which is also finite. It follows that $M_A$ is an extension of a finite group by a finite group, and is therefore itself finite. Consequently, the cohomology group $H^1(G, M_A)$ is finite.

\medskip
\noindent \textbf{Case 3:} Genus $g = 0$.

\begin{itemize}
\item[\textbf{(a)}] \textbf{$n \geq 3$:} \\
Any automorphism of $\mathbb{P}^1$ that fixes three or more points must be the identity. Hence $M_A$ is a finite group and $H^1(G, M_A)$ is finite.

\item[\textbf{(b)}] \textbf{$n = 2$:} \\
In this case, $M_A$ fits into a short exact sequence of $G$-groups:
\begin{equation*}
\begin{tikzcd}
1 \arrow[r] & \mathcal{O}_{H,T}^\times \arrow[r] & M_A \arrow[r] & \mathbb{Z}/2\mathbb{Z} \arrow[r] & 1.
\end{tikzcd}
\end{equation*}
Since  $\mathcal{O}_{H,T}^\times$ is a finitely generated abelian group,
\cite[Corollary 2.1.32]{milne2011class}
implies that $H^1(G, M_A)$ is finite for all the twists of the action of $G$ on $\mathcal{O}_{H,T}^\times$. The group $\Z/2\Z$ is a finite and therefore $H^1(G,M_A)$ is finite by Theorem \ref{coh theo}.

\item[\textbf{(c)}] \textbf{$n = 1$:} \\
A point in $\mathbb{P}^1$ corresponds to a Borel subgroup of $\PGL_2(\mathcal{O}_{H,T})$, and such a group fits into a short exact sequence
\begin{equation*}
\begin{tikzcd}
1 \arrow[r] & \mathcal{O}_{H,T} \arrow[r] & B \arrow[r] & \mathcal{O}_{H,T}^\times \arrow[r] & 1,
\end{tikzcd}
\end{equation*}
where $B \cong M_A$ is the stabilizer of a point in $\PGL_2(\mathcal{O}_{H,T})$. 

The cohomology group of $H^1(G,\mathcal{O}_{H,T}^\times )$ is finite and $\oht $ is a torsion free abelian group such that $\oht/|G|\oht $ is finite. Thus, Lemma \ref{cohomology lemma} implies that $H^1(G,
M_A)$ is finite.
\end{itemize}
In all cases, we conclude that $H^1(G, M_A)$ is finite.
\end{proof}

\begin{lemma}\label{embed to cohomo}
Let $A \in \Omega_{n, \overline{K}}(C; S)$, and let $H/K$ be as before. Then, there exists an injective map  
    \begin{equation}
        \Aut_\oks(\CC) \backslash \Sigma_A \xhookrightarrow{\quad} H^1(G, M_A),  
    \end{equation}
where $\Sigma_A$ is defined by
\begin{equation*}
    \Sigma_A = \Aut_\oht(\CC_H) \cdot \iota(A) \cap 
    \iota(\Omega_{n,\overline{K}}(C; S)).
\end{equation*}
\end{lemma}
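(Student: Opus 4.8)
The plan is to run the standard Galois-descent/nonabelian-cohomology argument that classifies the $G$-invariant points lying in a single $\Gamma$-orbit, where $\Gamma := \Aut_\oht(\CC_H)$ and $G = \Gal(H/K)$. First I would record the two structural facts that make the argument work. Because $H/K$ is Galois and $H$ is chosen so that $\iota$ is defined, the image $\iota(\Omega_{n,\overline K}(C;S))$ is exactly the set of $G$-fixed elements of $\Omega_{n,H}(C_H;T)$: an $n$-element subset of $C(H)$ descends to a Galois-invariant subset of $C(\overline K)$ precisely when it is stable under $G$. Second, the $G$-action on $\Omega_{n,H}(C_H;T)$ and on $\Gamma$ are compatible in the semilinear sense
\[
\sigma(\gamma \cdot P) = {}^{\sigma}\gamma \cdot \sigma(P), \qquad \sigma \in G,\ \gamma \in \Gamma,
\]
where ${}^{\sigma}\gamma$ denotes the Galois twist of an automorphism; this follows from diagram~(\ref{diag first}). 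In particular $\iota(A)$ is $G$-fixed, and $M_A = \Stab_\Gamma(\iota(A))$ is a $G$-subgroup of $\Gamma$.

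Next I would construct the map. Given $B \in \Sigma_A$, since $B$ lies in the orbit $\Gamma\cdot\iota(A)$ I may write $B = \gamma\cdot\iota(A)$ for some $\gamma \in \Gamma$. As $B$ and $\iota(A)$ are both $G$-fixed, for each $\sigma$ we obtain ${}^{\sigma}\gamma\cdot\iota(A) = \sigma(B) = B = \gamma\cdot\iota(A)$, so $c_\sigma := \gamma^{-1}\,{}^{\sigma}\gamma$ lies in $M_A$. A direct computation gives $c_{\sigma\tau} = c_\sigma\cdot{}^{\sigma}c_\tau$, so $\sigma\mapsto c_\sigma$ is a $1$-cocycle in $Z(G,M_A)$, and I send $B$ to its class $[c] \in H^1(G,M_A)$. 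Replacing $\gamma$ by $\gamma m$ with $m\in M_A$ (the only ambiguity, since $\gamma$ is determined only up to right multiplication by $M_A$) changes $c_\sigma$ to $m^{-1}c_\sigma\,{}^{\sigma}m$, i.e. moves it within its cohomology class, so $[c]$ is well defined.

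I would then check the map factors through $\Aut_\oks(\CC)\backslash\Sigma_A$ and is injective. If $B' = g\cdot B$ with $g\in\iota(\Aut_\oks(\CC))$, then $g$ is $G$-fixed, so taking $\gamma' = g\gamma$ yields the identical cocycle; hence the map descends to the quotient. For injectivity, suppose $B_1 = \gamma_1\iota(A)$ and $B_2 = \gamma_2\iota(A)$ give cohomologous cocycles; after adjusting $\gamma_2$ by an element of $M_A$ (which does not change $B_2$) I may assume the cocycles are equal, that is $\gamma_1^{-1}\,{}^{\sigma}\gamma_1 = \gamma_2^{-1}\,{}^{\sigma}\gamma_2$ for all $\sigma$. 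Then $g := \gamma_2\gamma_1^{-1}$ satisfies ${}^{\sigma}g = g$, so $g\in\Gamma^G$, and $B_2 = g\cdot B_1$. To conclude that $g$ comes from $\Aut_\oks(\CC)$ I invoke Galois descent of automorphisms, namely $\Aut_\oks(\CC) = \Gamma^G$, which holds because $\CC_H = \CC\times_{\oks}\oht$ and automorphisms descend along the faithfully flat cover $\Spec\,\oht\to\Spec\,\oks$.

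The main obstacle is the pair of descent inputs underpinning the argument: the compatibility $\sigma(\gamma\cdot P) = {}^{\sigma}\gamma\cdot\sigma(P)$ of the $G$- and $\Gamma$-actions, and the identification $\Aut_\oks(\CC) = \Aut_\oht(\CC_H)^G$ used in the injectivity step. Everything else is the formal cocycle bookkeeping governed by Theorem~\ref{coh theo}; the genuine content is verifying that these geometric descent statements hold in the present integral-model setting, for which I would appeal to faithfully flat descent applied to diagram~(\ref{diag first}).
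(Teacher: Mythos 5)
Your proposal is correct and follows essentially the same route as the paper: the same cocycle $\sigma\mapsto \gamma^{-1}\,{}^{\sigma}\gamma$ valued in $M_A$, the same well-definedness and $\Aut_\oks(\CC)$-invariance checks, and the same injectivity argument producing a $G$-fixed element $\gamma_2\gamma_1^{-1}$ that descends to $\Aut_\oks(\CC)$. The only difference is that you make explicit the two descent inputs (the semilinear compatibility of the actions and $\Aut_\oks(\CC)=\Aut_\oht(\CC_H)^G$) that the paper uses implicitly.
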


\begin{proof}
Let $B \in \Sigma_A$ and assume that $B = F \cdot A$ for some $F \in \Aut_\oht(\CC_H)$. Since both $A$ and $B$ are Galois-invariant sets of points, we have:
\begin{equation*}
    \begin{tikzcd}
        B \arrow[d, "\sigma"] &  & A \arrow[ll, "F"'] \arrow[d, "\sigma"] \\
        \sigma B = B            &  & \sigma A = A \arrow[ll, "\sigma F"']    
    \end{tikzcd}
\end{equation*}
where $\sigma F$ denotes the $\sigma F \sigma^{-1}$. From this diagram, we define a $1$-cocycle:  

\begin{align*}
    \psi_{B,F} :\, &G \to M_A \\
    &\sigma \mapsto F^{-1} \cdot \sigma F.
\end{align*}
If $B \in \Sigma_A$ admits two different factorizations for some $ F_1, F_2 \in \Aut_\oht(\CC)$
\begin{equation*}
    B = F_2 \cdot A = F_1 \cdot A,
\end{equation*}  
then  $F_1^{-1} \cdot F_2 \in M_A$. Using this, we compute  
\begin{equation*}
    \psi_{B,F_2}(\sigma)
    = (F_1^{-1} \cdot F_2)^{-1} \cdot \psi_{B,F_1}(\sigma) \cdot \sigma (F_1^{-1} \cdot F_2).
\end{equation*}
This shows that the classes of $\psi_{B,F_1}$ and $\psi_{B,F_2}$ are the same in $H^1(G,M)$. Thus, the definition depends only on $B$ and not on the choice of $F$, which implies that:
\begin{equation*}
    \psi :\, \Sigma_A \to H^1(G,M), \quad
    B \mapsto \psi_B.
\end{equation*}
Now, suppose that $B, C \in \Sigma_A$ and that $C = R \cdot B$ for some $R \in \Aut_\oks(\CC)$. Writing $B = F \cdot A$, we compute  

\begin{align*}
    \psi_C(\sigma) 
    &= ({R\cdot F})^{-1} \cdot \sigma (R\cdot F)  \\
    &= F^{-1} \cdot R^{-1} \cdot \sigma R \cdot \sigma F  \\
    &= F^{-1} \cdot R^{-1} \cdot R \cdot \sigma F  \\
    &= F^{-1} \cdot \sigma F \\
    &= \psi_B(\sigma).
\end{align*}
Thus, the map $\psi$ is invariant under the action of $\Aut_{\oks}(\CC)$, inducing a well-defined map  

\begin{equation*}
    \Aut_\oks(\CC) \backslash \Sigma_A \to H^1(G, M_A).
\end{equation*}
To show injectivity, assume that $B = F_1 \cdot A$ and $C = F_2 \cdot A$ satisfy $\psi_B = \psi_C$. Then, there exists an element $N \in M_A$ such that for all $\sigma \in G$,

\begin{align*}
    F_1^{-1} \cdot \sigma F_1 &= N^{-1} \cdot F_2^{-1} \cdot \sigma F_2 \cdot \sigma N.
\end{align*}
Rearranging, we obtain  

\begin{align*}
    F_2 \cdot N \cdot F_1^{-1} &= \sigma (F_2 \cdot N \cdot F_1^{-1}) \quad \forall \sigma \in G.
\end{align*}
Since this element is fixed by $G$, it must belong to $\Aut_{\oks}(\CC)$. Moreover, since  

\begin{equation*}
    (F_2 \cdot N \cdot F_1^{-1}) \cdot B = C,
\end{equation*}
we conclude that $\psi$ induces an injective map  
\begin{equation*}
    \Aut_\oks(\CC) \backslash \Sigma_A \xhookrightarrow{\quad} H^1(G, M).
\end{equation*}
 
\end{proof}

\begin{remark}
Lemma~\ref{embed to cohomo} also follows from the standard classification of twists via Galois cohomology.
\end{remark}

\subsection{Proof of Theorem \ref{K-points Theo}}
In this subsection we complete the proof of Theorem \ref{K-points Theo}. The only case we need to consider is $C \cong \P^1_K$ and the model $\CC$ over $\O_{K,S}$ not necessarily $\P^1_{\O_{K,S}}$. Then Assume $H$ as defined before, we have the following embedding:
\begin{equation*}
    \Omega_{n, \overline{K}}(C; S) \hookrightarrow \Omega_{n, H}(C_H; T).
\end{equation*}
Additionally, we know that the class group of $K$ becomes trivial in $H$, this implies that the model $\CC_H$ will be isomorphic to $\P^1_{\O_{H,T}}$. Thus, we are in the set up of the Theorem \ref{SplitTheoremAllButOne}. 

By Lemma \ref{embed to cohomo}
and Proposition \ref{finite coh}, we know each orbit over $\oht$ breaks into finitely many orbits over $\O_{K,S}$, which completes the proof for this case.\hfill$\qedsymbol$

\subsection{Proof of Theorem \ref{Kbar-points Theo}}
In this subsection, we complete the proof of Theorem~\ref{Kbar-points Theo}. Let $H/K$ be the Galois extension defined in the previous section. As established, we have a natural base change map
\begin{equation*}
    \Omega_{n, \overline{K}}(C; S) \hookrightarrow \Omega_{n, H}(C_H; T).
\end{equation*}
The base change induces a map on orbits:
\begin{equation*}
    \Aut_{\mathcal{O}_{K,S}}(\mathcal{C}) \backslash \Omega_{n, \overline{K}}(C; S) \longrightarrow \Aut_{\mathcal{O}_{H,T}}(\mathcal{C}_H) \backslash \Omega_{n, H}(C_H; T).
\end{equation*}
Since $\Aut_{\mathcal{O}_{K,S}}(\mathcal{C})$ is a subgroup of $\Aut_{\mathcal{O}_{H,T}}  (\mathcal{C}_H)$, this map is well-defined. By Theorem~\ref{K-points Theo}, the right-hand side is finite:
\begin{equation*}
    \#\left( \Aut_{\mathcal{O}_{H,T}}(\mathcal{C}_H) \backslash \Omega_{n, H}(C_H; T) \right) < \infty.
\end{equation*}
To conclude the proof, it suffices to show that each fiber of this map is finite. Let
\begin{equation*}
    \mathfrak{S} \in \Aut_{\mathcal{O}_{H,T}}(\mathcal{C}_H) \backslash \Omega_{n, H}(C_H; T)
\end{equation*}
be an orbit, and assume that $A \in \Omega_{n, \overline{K}}(C; S)$ maps to $\mathfrak{S}$. Then the fiber of $\mathfrak{S}$ is given by
\begin{equation*}
    \Aut_{\mathcal{O}_{K,S}}(\mathcal{C}) \backslash \Sigma_A,
\end{equation*}
where $\Sigma_A$ is the $\Aut_{\mathcal{O}_{H,T}}(\mathcal{C}_H)$-orbit of $A$. By Lemma~\ref{embed to cohomo}, there is an injective map
\begin{equation*}    \Aut_{\mathcal{O}_{K,S}}(\mathcal{C}) \backslash \Sigma_A \hookrightarrow H^1(G, M_A),
\end{equation*}
where $G = \Gal(H/K)$ and $M_A$ is the stabilizer of $A$ as a $G$-group.

Proposition~\ref{finite coh} ensures that $H^1(G, M_A)$ is finite, so $\Aut_{\mathcal{O}_{K,S}}(\mathcal{C}) \backslash \Sigma_A$ is finite as well. This establishes the finiteness of the fiber of $\mathfrak{S}$, completing the proof. \hfill$\qedsymbol$

 \bibliographystyle{amsalpha}
 \bibliography{Sigma}
\end{document}